\newtheorem{teo}{Theorem}[section]
\newtheorem{prop}[teo]{Proposition}
\newtheorem{cor}[teo]{Corollary}
\newtheorem{pro}[teo]{Problem}
\newtheorem{algo}[teo]{Algorithm}
\newtheorem{rem}[teo]{Remark}
\newcommand{\N}{\mathbb N}
\newcommand{\R}{\mathbb R}
\renewcommand{\H}{\mathcal{H}}
\newcommand{\G}{\mathcal G}
\newcommand{\id}{\textnormal{Id}}
\newcommand{\x}{\bm x}
\newcommand{\weak}{\rightharpoonup}
\newcommand{\ran}{\textnormal{ran}\,}
\newcommand{\dom}{\textnormal{dom}\,}
\newcommand{\zer}{\textnormal{zer}}
\newcommand{\gra}{\textnormal{gra}\,}
\newcommand{\scal}[2]{{\left\langle{{#1}\mid{#2}}\right\rangle}}
\newcommand{\menge}[2]{\big\{{#1}~\big |~{#2}\big\}}
\newcommand{\RPP}{\ensuremath{\left]0,+\infty\right[}}
\newcommand{\RX}{\ensuremath{\left]-\infty,+\infty\right]}}
\newcommand{\sri}{\ensuremath{\text{\rm sri}\,}}
\newcommand{\prox}{\ensuremath{\text{\rm prox}\,}}
\newcommand{\weakly}{\ensuremath{\:\rightharpoonup\:}}
\numberwithin{equation}{section}
\numberwithin{equation}{section}
\DeclareSymbolFont{fouriersymbols}{FMS}{futm}{m}{n}
\DeclareSymbolFont{fourierlargesymbols}{FMX}{futm}{m}{n}
\DeclareMathDelimiter{\nr}{\mathord}{fouriersymbols}{152}{fourierlargesymbols}{147}
\DeclareMathDelimiter{\nr}{\mathord}{fouriersymbols}{152}{fourierlargesymbols}{147}
\DeclareMathAlphabet{\mathpzc}{OT1}{pzc}{m}{it}
\title[Forward-Primal-Dual-Half-forward 
algorithm]{Forward Primal-Dual Half-Forward Algorithm for Splitting Four Operators}
\author{Fernando Rold\'an$^{\dagger}$}
\address{$^{\dagger}$ Departamento de Ingeniería Matemática, Universidad de Concepción, Concepción, Chile.}
\email{fernandoroldan@udec.cl}
\begin{document}
	\begin{abstract}
		In this article, we propose a splitting algorithm to find zeros of the sum of four maximally monotone operators in real Hilbert spaces. In particular, we consider a Lipschitzian operator, a cocoercive operator, and a linear composite term. In the case when the Lipschitzian operator is absent, our method reduces to the Condat-V\~u algorithm. On the other hand, when the linear composite term is absent, the algorithm reduces to the Forward-Backward-Half-Forward algorithm (FBHF). Additionally, in each case, the set of step-sizes that guarantee the weak convergence of those methods are recovered. Therefore, our algorithm can be seen as a generalization of Condat-V\~u and FBHF. Moreover, we propose extensions and applications of our method in multivariate monotone inclusions and saddle point problems. Finally, we present a numerical experiment in image deblurring problems.		
		\par
		\bigskip
		
		\noindent \textbf{Keywords.} {\it Operator splitting, monotone operators, monotone inclusion, convergence analysis, convex 			optimization}
		\par
		\bigskip \noindent
		2020 {\it Mathematics Subject Classification.} {47H05, 65K05 
			65K15, 90C25.}
		
	\end{abstract}
	
	\maketitle
	
\section{Introduction}

The problem of finding zeros of monotone operators has significant applications in fields such as convex 
programming \cite{Combettes2018MP}, 
game theory \cite{Nash13}, data science \cite{CombettesPesquet2021strategies}, image recovery 
\cite{BotHendrich2014TV,Briceno2011ImRe}, among others. In this article, we propose an 
algorithm for finding zeros of $A+L^*BL+C+D$, where $A$ and $B$ are maximally monotone 
operators, $L$ is a bounded 
linear 	operator and $L^*$ denotes its adjoint, $C$ is a Lipschitz operator, and $D$ is a 
cocoercive operator. Below we provide an overview of existing methods capable of solving this problem or specific instances of it.\\
In the case when $C=0$ this problem 
can be solve by the Condat-V\~u algorithm (CV) \cite{Condat13,Vu13}. This method is a primal 
dual algorithm which splits and leverages all the specific properties of the operators involved. CV consists of two backward steps by the resolvent of $A$ and $B$, one forward step by {evaluating} the operator $C$, and one 
{  evaluation} of $L$ and other of $L^*$. In the case when $D=0$, CV reduces to the 
primal-dual algorithm (PD), also known as the Chambolle-Pock algorithm \cite{ChambollePock2011}. Moreover, the relationship between PD and the proximal point algorithm \cite{martinet1970,RockafellarSIAM1976} was established in \cite{HeYuan2012}.  
Furthermore, in scenarios where $L=\id$, PD can be viewed as the Douglas-Rachford splitting (DR) \cite{Eckstein1992,Lions1979SIAM,Svaiter2011} by considering an auxiliary sequence (see \cite{BricenoRoldanrange,BricenoRoldan2021,OConnor2020}). An additional method solving 
the case $C=D=0$ can be found in \cite{briceno2011SIAM}. In the case when $L=\id$, 
the problem can be addressed by the Three-Operator-Splitting proposed in \cite{davis2015}. This method is a generalization of DR incorporating an {  evaluation} of $D$ in its iterations. \\
On the other hand, when $B=0$ and $L=0$, the problem can be addressed using 
the Forward-Backward-Half-Forward 
algorithm (FBHF) \cite{BricenoDavis2018}. This method splits all the operators
and consists in a double forward step by {  evaluating} $C$ and $D$, a backward step by calculating 
the resolvent of $A$, and a final forward step by {  evaluating} $C$. Additionally, by using 
product spaces techniques, the authors in \cite{BricenoDavis2018} proposed an algorithm for 
solving our problem when $B\neq 0$ and $L\neq 0$. If $C=0$ FBHF reduces 
to the Forward-Backward algorithm \cite{Lions1979SIAM}. Moreover, when $D=0$, FBHF reduces to 
the Tseng's splitting or Forward-Backward-Forward algorithm (FBF) \cite{Tseng2000SIAM}. Other 
methods dealing with Lipschitz operators can be found in 
\cite{Cevher2020SVVA,Csetnek2019AMO,Malitsky2020SIAMJO}. These methods solve our problem when 
$D=0$ and require only one {  evaluation} of the Lipschitz 
operator. Particularly, the authors in \cite{Malitsky2020SIAMJO} proposed the method called 
Forward-Reflected-Backward (FRB). Furthermore, they proposed an extension considering 
cocoercive operators called Forward-Half-Reflected-Backward (FHRB). 
Recently, in \cite{MorinBanertGiselsson2022,Giselsson2021NFBS} the authors proposed Forward-Backward 
type algorithms with non-linear warped resolvents (see 
\cite{BuiCombettesWarped2020,Bredies2022DPP}) through which, by selecting specific
non-linear Lipschitz operators, it is possible to establish the convergence of FBHF and FHRB. Moreover, by using a suitable non-linear warped resolvent in a primal-dual setting, the authors in \cite{MorinBanertGiselsson2022} proposed two methods to solve our problem. \\
When $D=0$ the problem can be solved by the method proposed in \cite{Combettes2012SVVA}. In this article, the 
authors consider a broader context involving a mixture of compositions and parallel sums. This 
method is derived by implementing FBF in a primal-dual setting.\\
The methods proposed in \cite{Rieger2020AMC,RyuVu2020} can solve our 
problem when $L=\id$ and $D=0$. These algorithms can be seen as a combination of DR with FRB and DR
with the recurrence proposed in \cite{Cevher2020SVVA}. These 
methods has been extended for addressing the case $D\neq 0$ in \cite{Tang2022fouroperator}.\\
Note that, since $C+D$ is a Lipschitz operator, the methods dealing with Lipschitz operators 
can address the case $D\neq 0$ by iterating $C+D$ as a Lipschitz operator. 
\\
In this article we propose an algorithm for solving the monotone inclusion which fully split the 
structure of the problem. This method can be seen as a combination of Condat-V\~u and FBHF: in the 
case when $C=0$ our method reduce to Condat-V\~u and when $B=0$ and $L=0$ our method reduces to 
FBHF. Additionally, in each case, we recover the convergence conditions on the step-sizes. In the 
particular case when $D=0$, our method combines PD and FBF, which is also a new contribution. We 
also proposed extension and applications of our method for solving multivariate monotone inclusions and saddle point problems. Finally, we present numerical 
experiments in image restoration, comparing the performance of the proposed algorithm with the methods proposed in \cite{BricenoDavis2018,MorinBanertGiselsson2022}. {  In particular, the proposed algorithm allows for larger step-sizes, which generates numerical advantages.}
\\
The article is organized as follows. In Section~\ref{sec:Problem} we present the problem to be
solved and the main algorithm. In Section~\ref{sec:Pre} we state our notations and preliminaries results. In Section~\ref{sec:conv} we analyze the weak convergence of our method, 
providing our main results and corollaries. In Section~\ref{sec:ext} we provide an extension of our method for solving multivariate monotone inclusions and  applications to saddle point problems. Finally, we dedicate Section~\ref{sec:exp} to 
numerical experiments.

\section{Problem Statement and Proposed Algorithm}\label{sec:Problem}
In this article, we aim to solve numerically the following problem.
\begin{pro}\label{pro:main}
	Let $(\H,\scal{\cdot}{\cdot})$ and $(\G,\scal{\cdot}{\cdot})$ 
	be real Hilbert spaces, let $A: \H \to 2^\H$ be a maximally 
	$\rho$-monotone operator for 
	some $\rho\in \R$, let $B:\G \to 
	2^\G$ 
	be a maximally monotone operator, let $L\colon \H \to \G$ be 
	a linear bounded operator with adjoint $L^*\colon \G \to \H$, 
	let 
	$C:\H 
	\to \H$ be a $\zeta$-Lipschitz operator for $\zeta \in 
	\RPP$, and let 
	$D:\H \to \H$ be a 
	$\beta$-cocoercive operator for $\beta \in \RPP$. Suppose 
	that 
	$A+C$ is maximally monotone. The problem is to
	\begin{equation} 
		\text{find }  (x,u) \in \H \times \G \text{ such that } 
		\begin{cases}
			0 &\in (A+C+D)x+L^*u\\
			0 &\in B^{-1}u-Lx.
		\end{cases} 
	\end{equation}
	under the hypothesis that its solution set, denoted by 
	$\bm{Z}$,  is not 
	empty.
\end{pro}
Note that, given $(\hat{x},\hat{u}) \in \bm{Z}$, $\hat{x}$ is a 
solution to the primal monotone inclusion
\begin{equation}\label{eq:primalinclu}
	\text{find }  x \in \H \text{ such that } 
	0 \in (A+L^*BL+C+D)x
\end{equation}
and $\hat{u}$ is a solution to the dual monotone inclusion
\begin{equation}\label{eq:dualinclu}
	\text{find }  u \in \G \text{ such that } 
	0 \in  B^{-1}u - L(A+C+D)^{-1}L^*u.
\end{equation}
An specific scenario of Problem~\ref{pro:main} is the following 
optimization problem.
\begin{pro}\label{pro:opti}
	Let $f\colon \H \to \RX$ and $g\colon \G \to \RX$, be proper 
	lower-semicontinuous convex functions, let $h\colon \H \to \R$ be 
	a differentiable 
	convex function with a $\zeta$-Lipschitz 
	gradient for some $\zeta\in \RPP$, and let 
	$d\colon \G \to \R$ be a differentiable convex function
	with a $1/\beta$-Lischitzian gradient for some 
	$\beta \in \RPP$. The problem is to
	\begin{equation}
		\min_{x \in \H} f(x)+g(Lx)+h(x)+d(x).
	\end{equation}
	under the assumption that its solution set is nonempty.
\end{pro}
Under standard qualification conditions, {  for example, $0 \in \sri(\dom g +L(\dom f))$ (see also  \cite[Proposition 6.19]{bauschkebook2017})}, Problem~\ref{pro:opti} is 
equivalent to
\begin{equation}\label{eq:inclupartial}
	\text{find } x \in \H \text{ such that } 0 \in (\partial f+ 
	L^*\partial g L+\nabla h +\nabla d)x
\end{equation}
where $\partial f$ denotes the 
subdifferential of the function $f$. By setting $A = 
\partial 
f$, $B=\partial g$, $C=\nabla h$, and $D = \nabla d$,  
Problem~\ref{pro:opti} can be seen as a particular case of the 
primal monotone inclusion on \eqref{eq:primalinclu}. Note that, by 
\cite[Corollary~18.17]{bauschkebook2017}, 
$\nabla d$ is $\beta$-cocoercive and $\nabla h$ is 
$1/\zeta$-cocoercive, then \eqref{eq:inclupartial} can be solved by the 
Condat-V\~u algorithm by considering the $(\beta+1/\zeta)$-cocoercive 
operator $(\nabla h +\nabla d)$. However, not every 
Lipschitizan operator is cocoercive, {  for instance, consider the linear operator $T\colon \R^2 \to \R^2$ given by $T(x,y)=(-y,x)$. Moreover, even in monotone inclusions associated with optimization problems it might be necessary to deal with non cocoercive Lipschitizian operators (see Section~\ref{section:saddle})}. For instance, in adjoint 
mismatch inverse problems, the solution is approximated by solving a 
relaxed monotone inclusions associated to an optimization 
problem (see \cite{ChouzenouxPesquetRoldan2023}). This leads to a disconnection between the Lipschitz property and cocoercivity. Additional examples of non cocoercive Lipschitz operators arising from 
optimization problems are available in \cite[Example~1]{CombettesMinh2022} and in \cite{Rieger2020AMC}.

In this article, we propose the following algorithm for solving 
Problem~\ref{pro:main}.
\begin{algo}\label{algo:algo1}
	In the context of Problem~\ref{pro:main}, let $(x_0,u_0)\in 
	\H\times \G$, let $(\sigma,\tau)\in \RPP^2$, suppose that 
	$\tau\rho>-1$, and consider the 
	iteration:
	\begin{equation}\label{eq:algo1}
		(\forall n\in\N)\quad 
		\begin{array}{l}
			\left\lfloor
			\begin{array}{l}
				p_{n+1} = C x_n\\
				z_{n+1} = J_{\tau A} (x_n-\tau (L^* u_n+p_{n+1}+D 
				x_n) 
				) \\
				q_{n+1} = \tau(Cz_{n+1}-p_{n+1})\\
				u_{n+1} =  J_{\sigma
					B^{-1}}\left(u_{n}+\sigma L( 
				2z_{n+1}-x_{n}-q_{n+1})\right)\\
				x_{n+1} = z_{n+1}-q_{n+1}.
			\end{array}
			\right.
		\end{array}
	\end{equation}
\end{algo}
Algorithm~\eqref{algo:algo1} combines 
Condat-V\~u and FBHF as follows. 

Note that, in the particular case when $C=0$, \eqref{eq:algo1} 
reduces to the Condat-V\~u algorithm:
\begin{equation}\label{eq:algoCV}
	(\forall n\in\N)\quad 
	\begin{array}{l}
		\left\lfloor
		\begin{array}{l}
			x_{n+1} = J_{\tau A} (x_n-\tau (L^* u_n+D 
			x_n) 
			) \\
			u_{n+1} =  J_{\sigma
				B^{-1}}\left(u_{n}+\sigma L( 
			2x_{n+1}-x_{n})\right).
		\end{array}
		\right.
	\end{array}
\end{equation}
On the other hand, when $B=0$ and $L=0$, 
the recurrence in \eqref{eq:algo1} reduces to the FBHF algorithm:
\begin{equation}\label{eq:FBHF}
	(\forall n\in\N)\quad 
	\begin{array}{l}
		\left\lfloor
		\begin{array}{l}
			z_{n+1} = J_{\tau A} (x_n-\tau (C{ x_{n}}+D 
			x_n))\\
			x_{n+1} = z_{n+1}-\tau(Cz_{n+1}-C x_n).
		\end{array}
		\right.
	\end{array}
\end{equation}
The weak convergence of the recurrence in \eqref{eq:algoCV} to a zero of $A+L^*BL+C$ is guaranteed for 
step-sizes $(\tau,\sigma)$ satisfying $
\sigma\tau\|L\|^2<1-\frac{\tau}{2\beta}$ \cite[Theorem~3.1]{Condat13}.
Additionally, the weak convergence of \eqref{eq:FBHF} to a zero of $A+C+D$ is guaranteed for $\tau \in \,
]0,{4\beta}/{(1+\sqrt{1+16\beta^2\zeta^2)}}[$.
Our main result guarantees the weak convergence of Algorithm~\eqref{algo:algo1} to a solution of Problem~\ref{algo:algo1} and additionally recover the conditions on step-sizes on Condat-V\~u and FBHF. 

{ 
	Note that, in the context of convex optimization in 
	Problem~\ref{pro:opti}, Algorithm~\ref{algo:algo1}
	can be written as follows:
	\begin{equation}\label{eq:algo1opti}
		(\forall n\in\N)\quad 
		\begin{array}{l}
			\left\lfloor
			\begin{array}{l}
				p_{n+1} = \nabla h (x_n)\\
				z_{n+1} = \prox_{\tau g} (x_n-\tau (L^* 
				u_n+p_{n+1}+\nabla d 
				(x_n)) 
				) \\
				q_{n+1} = \tau(\nabla h (z_{n+1})-p_{n+1})\\
				u_{n+1} =  \prox_{\sigma
					g^*}\left(u_{n}+\sigma L( 
				2z_{n+1}-x_{n}-q_{n+1})\right)\\
				x_{n+1} = z_{n+1}-q_{n+1}.
			\end{array}
			\right.
		\end{array}
	\end{equation}
	Finally, note that the primal-dual extension of FBHF proposed in \cite[Section~5]{BricenoDavis2018} for solving Problem~\ref{pro:main}, requires one extra {  evaluation} of $L$ and $L^*$ per iteration compared to Algorithm~\ref{algo:algo1}.}	
\section{Preliminaries and Notation}\label{sec:Pre}
In this article, $\H$ and $\G$ are real Hilbert spaces with scalar 
product $\scal{\cdot}{\cdot}$ and norm $\|\cdot \|$. {  Let $X$ and $Y$ be subsets of $\H$, $X\times Y$ denotes the Cartesian product of $X$ and $Y$. We use the notation $X^2=X\times X$}. We denoted 
by $\to$ the strong convergence and by $\weakly$ the weak 
convergence. The identity operator is 
denoted by $\id$. Given a linear operator $L:\H\to\G$ we denote 
its adjoint by $L^* \colon \G\to\H$ and its norm, indistinctly, 
by $\|L\|:=\sup_{x \in \H}(\|Lx\|/\|x\|)$. Note that, for every $x \in \H$, $\|Lx\|\leq \|L\|\|x\|$. Consider the
following classic inequalities, 
\begin{eqnarray}\label{eq:CS}
	&\hspace*{-1cm}(\textnormal{Cauchy–Schwarz inequality})  &(\forall 
	(x,u) \in 
	\H^2) \quad 
	|\scal{x}{u}| \leq \|x\|\|u\|,\\
	\label{eq:YI}&(\textnormal{Young's inequality})   &(\forall (a,b)\in 
	\R^2)(
	\forall \epsilon 
	>0) \quad 2ab \leq 
	\epsilon a^2+\frac{b^2}{\epsilon}.
\end{eqnarray} 	
Let $T\colon \H \rightarrow 
\H$ and $\beta \in \left]0,+\infty\right[$. The operator $T$ is 
$\beta$-cocoercive if 
\begin{equation} \label{def:coco}
	(\forall x \in \H) (\forall y \in \H)\quad \langle x-y \mid 
	Tx-Ty 
	\rangle 
	\geq \beta \|Tx - Ty 
	\|^2.
\end{equation}
Additionally, the operator $T$ is $\beta$-Lipschitz if 
\begin{equation} \label{def:lips}
	(\forall x \in \H) (\forall y \in \H)\quad \|Tx-Ty\| \leq  
	\beta\|x - y \|.
\end{equation}	
Let $A\colon\H \rightarrow 2^{\H}$ be a set-valued operator.
The domain, range, zeros, and graph of $A$ 
are defined, respectively, by:
\begin{align*}
	&\dom\, A = \menge{x \in \H}{Ax \neq  \varnothing}, \quad 
	\ran\, A = \menge{u \in \H}{(\exists x \in \H)\,\, u \in Ax}\\
	&\zer A = 	\menge{x \in \H}{0 \in Ax}, \quad
	\gra A = \menge{(x,u) \in \H \times \H}{u \in Ax}.
\end{align*}
The inverse of the $A$ is defined by
$A^{-1}\colon\H \rightarrow 2^{\H} \colon u \mapsto \menge{x \in \H}{u \in Ax}$.
Let $\rho \in \R$, the operator $A$ is called $\rho$-monotone if 
\begin{equation}\label{eq:defrhomon}
	\left(\forall \big((x,u),(y,v)\big) \in (\gra A)^2\right) 
	\quad 
	\scal{x-y}{u-v} 
	\geq \rho \|x-y\|^2.
\end{equation}
Moreover, $A$ is maximally $\rho$-monotone if it is 
$\rho$-monotone and its graph is 
maximal in the sense of 
inclusions among the graphs of $\rho$-monotone operators. In the 
case when $\rho=0$, $A$ is (maximally) monotone, and when $\rho 
>0$ $A$ is strongly (maximally) monotone.
The resolvent of a maximally $\rho$-monotone operator $A$ is 
defined by {  $J_A\colon\H \rightarrow 2^{\H} \colon x \mapsto (\id+A)^{-1}x$}. If $\rho>-1$, $J_A$ is single valued
\cite{BauschkeMoursiXianfu2021}. Note that, if $A$ is 
$\rho$-monotone, then, for every $\gamma \in \RPP$, $\gamma A$ is 
$\gamma\rho$-monotone. If $A$ is maximally monotone, then, its 
inverse $A^{-1}$ is also a maximally monotone operator.
In the context of Problem~\ref{pro:main}, since  $A+C$ is 
maximally monotone and $D$ is monotone and continuous with full 
domain, it follows from \cite[Corollary~20.28 \& 
Corollary~25.5(i)]{bauschkebook2017} 
that $A+C+D$ is maximally monotone. Hence, it follows  from 
\cite[Proposition 20.23]{bauschkebook2017} that the operator
\begin{equation}\label{eq:defbA}
	\boldsymbol{A}: \H \oplus\G \to 
	2^{\H \oplus\G} \colon (x,u) \mapsto (A+C)x \times 
	B^{-1}u,
\end{equation}
is maximally monotone. Moreover, it follows from 
the maximal monotonicity of $A+C+D$ and 
\cite[Proposition~2.7(iii)]{briceno2011SIAM} that the operator 
\begin{equation}\label{eq:defbM}
	\boldsymbol{M}: \H 
	\oplus\G \to 
	2^{\H \oplus\G} \colon (x,u) \mapsto ((A+D+C)x+L^*u) 
	\times 
	(B^{-1}u-Lx)
\end{equation}
is maximally monotone. Note that $\zer \bm{M} = \bm{Z}$.

We denote by $\Gamma_0(\H)$ the class of proper lower 
semicontinuous convex functions $f\colon\H\to\RX$. Let 
$f\in\Gamma_0(\H)$.
The Fenchel conjugate of $f$ is 
defined by $f^*\colon u\mapsto \sup_{x\in\H}(\scal{x}{u}-f(x))$. We have
$f^*\in \Gamma_0(\H)$. The subdifferential of $f$ is defined by
$$\partial f\colon x\mapsto \menge{u\in\H}{(\forall y\in\H)\:\: 
	f(x)+\scal{y-x}{u}\le f(y)},$$ we have that $\partial f$ is a maximally monotone operator and
$(\partial f)^{-1}=\partial f^*$ 
and that $\zer\,\partial f$ is the set of 
minimizers of $f$, which is denoted by $\arg\min_{x\in \H}f$. 	
The proximity operator of $f$ is $
\label{e:prox}
\prox_{f}\colon 
x\mapsto\textnormal{arg min}_{y\in\H}\left(f(y)+\frac{1}{2}\|x-y\|^2\right)$.
We have $\prox_f=J_{\partial f}$.

For a further background on monotone operators and convex analysis, 
the reader is referred to \cite{bauschkebook2017}.

\section{Convergence Theory}\label{sec:conv}
This section is dedicated to studying the convergence of 
Algorithm~\ref{algo:algo1}. First, we present 
the following proposition, providing key inequalities to 
guarantee the weak convergence of our method.
\begin{prop}\label{prop:des}
	In the context of Problem~\ref{pro:main}, let $(x_0,u_0)\in 
	\H\times \G$, let $(\sigma,\tau)\in \RPP^2$, suppose that 
	$\tau\rho>-1$, and let 
	$(z_n)_{n \in 
		\N}$, $(x_n)_{n \in \N}$, and $(u_n)_{n \in \N}$ be { the} 
	sequences 
	generated by Algorithm~\ref{algo:algo1}.
	For each $n \in \N$ and each $(x,u) \in \bm{Z}$, define
	\begin{equation}\label{def:gamman}
		\Gamma_{n}(x,u):=\|x_{n}-x\|^2 
		+\frac{\tau}{\sigma}\|u_{n}-u\|^2-2\tau\scal{x_{n}-x}{L^*(u_{n}-u)}.
	\end{equation}
	The following assertions hold:
	
	\begin{enumerate}
		\item \label{prop:des1} 
		{  For every $(x,u) \in \bm{Z}$, every $(\varepsilon,\kappa) \in \RPP^2$}, and every $n \in \N$
		\begin{align}\label{eq:propdes1} 
			\Gamma_{n+1}(x,u) &\leq 
			\Gamma_{n}(x,u)-(1-\varepsilon-\tau^2\zeta^2-\kappa\tau\sigma\|L\|^2)\|z_{n+1}-x_n\|^2
			\nonumber\\
			&\hspace{0.8cm}-\frac{\tau}{\sigma}\left(1-\frac{1}{\kappa}\right)\|u_{n+1}-u_n\|^2
			-\frac{\tau}{\varepsilon}\left(2\beta\varepsilon-\tau\right)\|Dx_n-Dx\|^2.
		\end{align}
		\item \label{prop:des2}	{  For every $(x,u) \in \bm{Z}$ and}  every $n \in \N$
		\begin{equation}\label{eq:propdes2} 
			\Gamma_n(x,u) \geq 
			(1-\sigma\tau\|L\|^2)\max\left\lbrace\|x_{n}-x\|^2,\frac{\tau}{\sigma}\|u_{n}-u\|^2\right\rbrace.
		\end{equation}
	\end{enumerate}
\end{prop}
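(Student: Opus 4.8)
The plan is to dispatch assertion~\eqref{eq:propdes2} by a completion of squares and assertion~\eqref{eq:propdes1} by extracting the two monotone inclusions hidden in the resolvent steps and combining them, the forward operators $C$ and $D$ being absorbed afterwards through the Lipschitz bound and cocoercivity, respectively.

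For \eqref{eq:propdes2} I would set $a=x_n-x$ and $b=u_n-u$, so that $\Gamma_n(x,u)=\|a\|^2+\frac{\tau}{\sigma}\|b\|^2-2\tau\scal{a}{L^*b}$. Using $\scal{a}{L^*b}=\scal{La}{b}$ with Cauchy--Schwarz \eqref{eq:CS} and $\|La\|\le\|L\|\|a\|$ gives $2\tau\scal{a}{L^*b}\le 2\tau\|L\|\|a\|\|b\|$, and the exact identity $\sigma\tau\|L\|^2\|a\|^2-2\tau\|L\|\|a\|\|b\|+\frac{\tau}{\sigma}\|b\|^2=\tau(\sqrt{\sigma}\|L\|\|a\|-\frac{1}{\sqrt{\sigma}}\|b\|)^2\ge 0$ yields $\Gamma_n(x,u)\ge(1-\sigma\tau\|L\|^2)\|a\|^2$. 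Completing the square the other way, keeping the dual term as leading, gives $\Gamma_n(x,u)\ge(1-\sigma\tau\|L\|^2)\frac{\tau}{\sigma}\|b\|^2$, and taking the maximum proves \eqref{eq:propdes2}.

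For \eqref{eq:propdes1} I would first read off the inclusions. Since $J_{\tau A}=(\id+\tau A)^{-1}$ and $J_{\sigma B^{-1}}=(\id+\sigma B^{-1})^{-1}$, the definitions of $z_{n+1}$ and $u_{n+1}$ give $\frac{x_n-z_{n+1}}{\tau}-L^*u_n-Cx_n-Dx_n\in Az_{n+1}$ and $\frac{u_n-u_{n+1}}{\sigma}+L(2z_{n+1}-x_n-q_{n+1})\in B^{-1}u_{n+1}$. Adding $Cz_{n+1}$ to the first inclusion and using $q_{n+1}=\tau(Cz_{n+1}-Cx_n)$ together with $x_{n+1}=z_{n+1}-q_{n+1}$ converts it into $\frac{x_n-x_{n+1}}{\tau}-L^*u_n-Dx_n\in(A+C)z_{n+1}$, an inclusion into the \emph{monotone} operator $A+C$; likewise $2z_{n+1}-x_n-q_{n+1}=z_{n+1}+x_{n+1}-x_n$ recasts the dual step as $\frac{u_n-u_{n+1}}{\sigma}+L(z_{n+1}+x_{n+1}-x_n)\in B^{-1}u_{n+1}$. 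Because $(x,u)\in\bm{Z}$ gives $-L^*u-Dx\in(A+C)x$ and $Lx\in B^{-1}u$, monotonicity of $A+C$ and of $B^{-1}$ applied to these pairs (each scaled by $2\tau$) produces two inequalities that I then add. Expanding every squared-norm difference through standard polarization identities, the primal and dual quadratic parts reproduce $\Gamma_{n+1}(x,u)-\Gamma_n(x,u)$ exactly, leaving the primal remainder $-\|x_{n+1}-x_n\|^2+2\scal{q_{n+1}}{x_n-x_{n+1}}$, the dual remainder $-\frac{\tau}{\sigma}\|u_{n+1}-u_n\|^2$, the cocoercive term $-2\tau\scal{z_{n+1}-x}{Dx_n-Dx}$, and a residual $L$-coupling. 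I expect the main obstacle to be the bookkeeping of the correction $q_{n+1}$, i.e.\ the discrepancy between $z_{n+1}$ and $x_{n+1}=z_{n+1}-q_{n+1}$: the point is to verify that, after all cancellations, the leftover coupling collapses to the single clean term $2\tau\scal{L(z_{n+1}-x_n)}{u_{n+1}-u_n}$.

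It then remains to bound the three genuine remainders. For the cocoercive term I would split $z_{n+1}-x=(x_n-x)+(z_{n+1}-x_n)$, apply $\beta$-cocoercivity \eqref{def:coco} to $\scal{x_n-x}{Dx_n-Dx}$ and Young's inequality \eqref{eq:YI} with parameter $\varepsilon$ to the residual cross term, obtaining $\varepsilon\|z_{n+1}-x_n\|^2-\frac{\tau}{\varepsilon}(2\beta\varepsilon-\tau)\|Dx_n-Dx\|^2$. For the primal remainder, substituting $x_{n+1}-x_n=(z_{n+1}-x_n)-q_{n+1}$ cancels all $q_{n+1}$ cross terms and leaves exactly $-\|z_{n+1}-x_n\|^2+\|q_{n+1}\|^2$, whereupon the $\zeta$-Lipschitz bound \eqref{def:lips} gives $\|q_{n+1}\|^2=\tau^2\|Cz_{n+1}-Cx_n\|^2\le\tau^2\zeta^2\|z_{n+1}-x_n\|^2$. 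Finally, Young's inequality \eqref{eq:YI} with parameter $\kappa$ applied to the residual coupling gives $\kappa\tau\sigma\|L\|^2\|z_{n+1}-x_n\|^2+\frac{\tau}{\sigma\kappa}\|u_{n+1}-u_n\|^2$, whose dual part combines with $-\frac{\tau}{\sigma}\|u_{n+1}-u_n\|^2$ into $-\frac{\tau}{\sigma}(1-\frac{1}{\kappa})\|u_{n+1}-u_n\|^2$. Collecting the coefficient of $\|z_{n+1}-x_n\|^2$, namely $-1+\varepsilon+\tau^2\zeta^2+\kappa\tau\sigma\|L\|^2$, yields precisely \eqref{eq:propdes1}.
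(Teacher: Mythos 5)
Your proof is correct and follows essentially the same route as the paper's: extract the inclusions into $(A+C)$ and $B^{-1}$ from the two resolvent steps, apply monotonicity at the solution pair, telescope the quadratic parts into $\Gamma_{n+1}-\Gamma_n$, and handle the three remainders via cocoercivity plus Young's inequality (parameter $\varepsilon$) for $D$, the $\zeta$-Lipschitz bound for $q_{n+1}$, and Young's inequality with parameter $\kappa$ for the coupling $2\tau\scal{z_{n+1}-x_n}{L^*(u_{n+1}-u_n)}$. The only difference is organizational: you substitute $x_{n+1}=z_{n+1}-q_{n+1}$ into the primal inclusion at the outset and track the $q_{n+1}$ cross terms (which indeed collapse to $-\|z_{n+1}-x_n\|^2+\|q_{n+1}\|^2$), whereas the paper first derives the inequality in terms of $\|z_{n+1}-x\|^2$ and converts to $\|x_{n+1}-x\|^2$ afterwards---the algebra is identical.
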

\begin{proof}
\begin{enumerate}
	\item 
	Let $(x,u) \in \bm{Z}$ and fix $n \in \N$. 
	It follows from \eqref{eq:algo1} that
	\begin{align*}
		\begin{cases}
			\dfrac{x_n-z_{n+1}}{\tau}-L^*u_n- C x_n -  
			Dx_n + C z_{n+1} \in (A+C)
			z_{n+1}\\
			\dfrac{u_n-u_{n+1}}{\sigma}+L(2z_{n+1}-x_n-\tau(Cz_{n+1}-Cx_n))
			\in B^{-1}u_{n+1}.
		\end{cases}
	\end{align*}	
	Then, since $-L^*u-Dx \in (A+C)x$ and $Lx \in B^{-1}u$,  
	by 
	the monotonicity of the operator $\boldsymbol{A}$
	defined in \eqref{eq:defbA}, we have that
	\begin{align}\label{eq:mon}
		0	\leq &\scal{\frac{x_n-z_{n+1}}{\tau}-L^*u_n- C 
			x_n -  
			Dx_n +Cz_{n+1}
			+L^*u+Dx}{z_{n+1}-x}\nonumber\\
		&\quad +
		\scal{\frac{u_n-u_{n+1}}{\sigma}+L(2z_{n+1}-x_n-\tau(Cz_{n+1}-Cx_n)-x)}
		{u_{n+1}-u}.
	\end{align}	
	Note that
	\begin{align*}
		\frac{1}{\tau}\scal{x_n-z_{n+1}}{z_{n+1}-x}
		=\frac{1}{2\tau}(\|x_n-x\|^2-\|x_n-z_{n+1}\|^2-\|z_{n+1}-x\|^2)
	\end{align*}
	and
	\begin{align*}
		\frac{1}{\sigma}\scal{u_n-u_{n+1}}
		{u_{n+1}-u}
		=\frac{1}{2\sigma}(\|u_n-u\|^2-\|u_n-u_{n+1}\|^2-\|u_{n+1}-u\|^2).
	\end{align*}
		%
	Moreover, by the cocoercivity of $D$,{  \eqref{eq:CS}, and \eqref{eq:YI}}, for every 
	$\varepsilon 
	>0$, we have
	\begin{align*}
		\scal{Dx_n-Dx}{x-z_{n+1}} &=  
		\scal{Dx_n-Dx}{x-x_{n}}+\scal{Dx_n-Dx}{x_{n}-z_{n+1}}\\
		&\leq 
		-\beta\|Dx_n-Dx\|^2+\scal{Dx_n-Dx}{x_{n}-z_{n+1}}\\
		&\leq \left(	
		\frac{\tau}{2\varepsilon}-\beta\right)\|Dx_n-Dx\|^2+\frac{\varepsilon}{2\tau}
		\|x_{n}-z_{n+1}\|^2.
	\end{align*}
	Hence, it follows from \eqref{eq:mon} that
	\begin{align}\label{eq:mon2}
		\|z_{n+1}&-x\|^2 
		+\frac{\tau}{\sigma}\|u_{n+1}-u\|^2\nonumber\\
		&\leq 
		\|x_n-x\|^2-(1-\varepsilon)\|z_{n+1}-x_n\|^2 + 
		\frac{\tau}{\sigma}(\|u_n-u\|^2-\|u_n-u_{n+1}\|^2)\nonumber\\
		&\quad 
		\quad-\frac{\tau}{\varepsilon}\left(2\beta\varepsilon-\tau\right)\|Dx_n-Dx\|^2
		+2\tau\scal{Cz_{n+1}-Cx_n}{z_{n+1}-x}\nonumber\\
		&\quad 
		\quad+2\tau\scal{2z_{n+1}-x_n-\tau(Cz_{n+1}-Cx_n)-x}
		{L^*(u_{n+1}-u)}\nonumber\\
		& \quad \quad -2\tau\scal{L^*(u_n-u)}{z_{n+1}-x}.
	\end{align}
	Additionally, from \eqref{eq:algo1} { and the Lipschitz property of $C$}, we deduce
	\begin{align*}
		\|x_{n+1}-x\|^2 &= 
		\|z_{n+1}-\tau(Cz_{n+1}-Cx_n)-x\|^2\\
		&=\|z_{n+1}-x\|^2-2\tau\scal{z_{n+1}-x}{Cz_{n+1}-Cx_n}+\tau^2\|Cz_{n+1}-Cx_n\|^2\\
		&\leq 
		\|z_{n+1}-x\|^2-2\tau\scal{z_{n+1}-x}{Cz_{n+1}-Cx_n}
		+\tau^2\zeta^2\|z_{n+1}-x_n\|^2.
	\end{align*}
	Therefore,  from \eqref{eq:mon2} we conclude that
	\begin{align}\label{eq:mon3}
		\|x_{n+1}-x\|^2 
		+&\frac{\tau}{\sigma}\|u_{n+1}-u\|^2\nonumber\\
		&\leq 
		\|x_n-x\|^2-(1-\varepsilon-\tau^2\zeta^2)\|z_{n+1}-x_n\|^2\nonumber\\
		&\hspace*{1cm}	
		+\frac{\tau}{\sigma}(\|u_n-u\|^2-\|u_n-u_{n+1}\|^2)\nonumber\\
		&\hspace*{1cm}-\frac{\tau}{\varepsilon}\left(2\beta\varepsilon-\tau\right)\|Dx_n-Dx\|^2
		-2\tau\scal{L^*(u_n-u)}{z_{n+1}-x}\nonumber\\
		&\hspace*{1cm}+2\tau\scal{2z_{n+1}-x_n-\tau(Cz_{n+1}-Cx_n)-x}
		{L^*(u_{n+1}-u)}.
	\end{align}
	By summing the last two terms of \eqref{eq:mon3}, we 
	obtain			
	\begin{align}\label{eq:mon4}	
		&\scal{2z_{n+1}-x_n-\tau(Cz_{n+1}-Cx_n)-x}{L^*(u_{n+1}-u)}
		-\scal{L^*(u_n-u)}{z_{n+1}-x}\nonumber\\
		&=\scal{z_{n+1}+x_{n+1}-x_n-x}{L^*(u_{n+1}-u)}
		-\scal{L^*(u_n-u)}{z_{n+1}-x}\nonumber\\
		&=\scal{x_{n+1}-x}{L^*(u_{n+1}-u)}+\scal{z_{n+1}-x_n}{L^*(u_{n+1}-u)}
		-\scal{L^*(u_n-u)}{z_{n+1}-x}\nonumber\\
		&=\scal{x_{n+1}-x}{L^*(u_{n+1}-u)}-\scal{x_n-x}{L^*(u_n-u)}
		+\scal{z_{n+1}-x_n}{L^*(u_{n+1}-u_n)}.
	\end{align}
	Moreover, { \eqref{eq:CS} and \eqref{eq:YI} yield}
	\begin{align}\label{eq:mon5}	
		2\tau\scal{z_{n+1}-x_n}{L^*(u_{n+1}-u_n)}&\leq 
		2\tau\|L\|\|z_{n+1}-x_n\|\|u_{n+1}-u_n\|\nonumber\\
		&\leq 
		\kappa\tau\sigma\|L\|^2\|z_{n+1}-x_n\|^2
		+\frac{\tau}{\sigma\kappa}\|u_{n+1}-u_n\|^2.
	\end{align}	
	Therefore, it follows from 
	\eqref{eq:mon3}-\eqref{eq:mon5} that 
	%
	%
	%
	\begin{align*}
		\|x_{n+1}-x\|^2 
		+\frac{\tau}{\sigma}&\|u_{n+1}-u\|^2-2\tau\scal{x_{n+1}-x}{L^*(u_{n+1}-u)}\\
		&\leq
		\|x_n-x\|^2+\frac{\tau}{\sigma}\|u_n-u\|^2-2\tau\scal{x_n-x}{L^*(u_n-u)}\\
		&\hspace{0.5cm}-(1-\varepsilon-	
		\tau^2\zeta^2-\kappa\tau\sigma\|L\|^2)\|z_{n+1}-x_n\|^2
		\\
		&\hspace{0.5cm}-\frac{\tau}{\sigma}\left(1-\frac{1}{\kappa}\right)\|u_{n+1}-u_n\|^2
		-\frac{\tau}{\varepsilon}\left(2\beta\varepsilon-\tau\right)\|Dx_n-Dx\|^2.
	\end{align*}
	The result follows.
	\item It follow from { \eqref{eq:CS} and \eqref{eq:YI}} that
	\begin{align*}
		(\forall n \in \N) \quad  \Gamma_{n}{ (x,u)}&=\|x_{n}-x\|^2 
		+\frac{\tau}{\sigma}\|u_{n}-u\|^2-2\tau\scal{x_{n}-x}{L^*(u_{n}-u)}\\
		&\geq \|x_{n}-x\|^2 
		+\frac{\tau}{\sigma}\|u_{n}-u\|^2 - \sigma\tau 
		\|L\|^2\|x_n-x\|^2-\frac{\tau}{\sigma}\|u_n-u\|^2\\
		&\geq (1-\sigma\tau\|L\|^2)\|x_n-x\|^2.
	\end{align*}
	Analogously, we conclude that
	\begin{align*}
		(\forall n \in \N) \quad  \Gamma_{n}{ (x,u)}\geq 
		\frac{\tau}{\sigma}(1-\sigma\tau\|L\|^2)\|u_n-u\|^2.
	\end{align*}
	The results follows.
\end{enumerate}	
\end{proof}
The following theorem is our main result and provides the weak 
convergence of Algorithm~\ref{algo:algo1} to a solution point in 
$\bm{Z}$.
\begin{teo}\label{teo:conve1}
	In the context of Problem~\ref{pro:main}, let $(\sigma,\tau)\in \RPP^2$,  suppose that 
	$\tau\rho>-1$, let $\varepsilon 
	\in 
	\,]0,1[$, let $(x_0,u_0)\in 
	\H\times \G$, and let 
	$(x_n)_{n \in 
		\N}$ and $(u_n)_{n \in \N}$ be the sequences 
	generated by Algorithm~\ref{algo:algo1}. Suppose that	
	\begin{equation}\label{eq:steps1}
		\tau \leq 2\beta\varepsilon
	\end{equation}	
	and that
	\begin{equation}\label{eq:steps2}
		\tau \sigma \|L\|^2 +\tau^2\zeta^2< 1-\varepsilon.
	\end{equation}
	{ Then}, the following statements hold
	\begin{enumerate}
		\item $\sum_{k=0}^\infty\|z_{k+1}-x_k\|^2<+\infty$, 
		$\sum_{k=0}^\infty\|x_{k+1}-x_k\|^2<+\infty$, and 
		$\sum_{k=0}^\infty\|u_{k+1}-u_k\|^2<+\infty$.
		\item There exist $(x,u) \in \bm{Z}$ such that 
		$(x_n,u_n)\weak (x,u)$.	
	\end{enumerate}
	
\end{teo}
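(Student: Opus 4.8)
The plan is to leverage Proposition~\ref{prop:des} as the engine for a standard Opial-type weak convergence argument. The quasi-Fej\'er structure is already encoded in the quantity $\Gamma_n(x,u)$, so the proof splits naturally into three movements: establishing summability of the increments (part (i)), extracting boundedness and weak cluster points, and finally identifying every cluster point as an element of $\bm Z$ to invoke Opial's lemma (part (ii)).

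First I would verify that the hypotheses \eqref{eq:steps1} and \eqref{eq:steps2} render all three error coefficients in \eqref{eq:propdes1} nonnegative for a suitable choice of $\kappa$. Condition \eqref{eq:steps1} gives $2\beta\varepsilon-\tau\ge 0$, killing the last term; and since \eqref{eq:steps2} forces $\tau\sigma\|L\|^2<1-\varepsilon$, one can pick $\kappa>1$ close enough to $1$ so that simultaneously $1-\varepsilon-\tau^2\zeta^2-\kappa\tau\sigma\|L\|^2>0$ and $1-1/\kappa>0$; the room for this is exactly the strict gap $\tau\sigma\|L\|^2+\tau^2\zeta^2<1-\varepsilon$. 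With such $\kappa$ fixed, \eqref{eq:propdes1} shows $(\Gamma_n(x,u))_{n\in\N}$ is nonincreasing, hence convergent and bounded; telescoping the inequality then yields $\sum_k\|z_{k+1}-x_k\|^2<\infty$ and $\sum_k\|u_{k+1}-u_k\|^2<\infty$. For the middle sum, I would use $x_{n+1}-x_n=(z_{n+1}-x_n)-q_{n+1}$ with $q_{n+1}=\tau(Cz_{n+1}-Cx_n)$ and the $\zeta$-Lipschitz bound $\|q_{n+1}\|\le\tau\zeta\|z_{n+1}-x_n\|$, so $\|x_{n+1}-x_n\|^2\lesssim\|z_{n+1}-x_n\|^2$, giving the third summability claim. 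This establishes (i) and also $z_{n+1}-x_n\to 0$, $u_{n+1}-u_n\to 0$.

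Next, from \eqref{eq:propdes2} and $1-\sigma\tau\|L\|^2>0$ (a consequence of \eqref{eq:steps2}), the convergence of $\Gamma_n(x,u)$ bounds $\|x_n-x\|$ and $\|u_n-u\|$, so $(x_n,u_n)$ is bounded and admits a weakly convergent subnet; let $(x_{n_k},u_{n_k})\weak(\bar x,\bar u)$. The crux is to show $(\bar x,\bar u)\in\zer\bm M=\bm Z$. I would rewrite the two inclusions in \eqref{eq:algo1} as $w_{n+1}\in\bm M(z_{n+1},u_{n+1})$ for an explicit residual $w_{n+1}$ assembled from the terms $(x_n-z_{n+1})/\tau$, $(u_n-u_{n+1})/\sigma$, the cross terms $L^*(u_n-u_{n+1})$, $L(\cdot)$, and the Lipschitz increments $Cz_{n+1}-Cx_n$. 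Using $z_{n+1}-x_n\to 0$, $u_{n+1}-u_n\to 0$ together with the Lipschitz continuity of $C$ and cocoercivity/continuity of $D$, one checks $w_{n_k}\to 0$ strongly while $(z_{n_k},u_{n_k})\weak(\bar x,\bar u)$ (since $z_{n+1}-x_n\to0$ forces $z_{n_k}$ and $x_{n_k}$ to share the weak limit). Passing to the limit in the graph of the maximally monotone $\bm M$, which is weakly-strongly sequentially closed, yields $0\in\bm M(\bar x,\bar u)$, i.e. $(\bar x,\bar u)\in\bm Z$.

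The main obstacle I anticipate is the limiting argument for weak cluster points: the operators $C$ and $D$ are only weakly continuous through their increments, not pointwise under weak limits, so one must route the convergence entirely through the strongly vanishing residual $w_{n+1}$ and the weak-strong closedness of $\gra\bm M$, rather than attempting to pass $C\bar x$, $D\bar x$ directly. A secondary subtlety is that Opial's condition must be applied to the metric induced by $\Gamma_n$ (equivalently, the renormed inner product on $\H\oplus\G$ with Gram matrix $\begin{psmallmatrix}\id & -\tau L^*\\ -\tau L & (\tau/\sigma)\id\end{psmallmatrix}$), which is genuinely equivalent to the product norm precisely because $1-\sigma\tau\|L\|^2>0$; one then confirms $\lim_n\Gamma_n(x,u)$ exists for every $(x,u)\in\bm Z$ and concludes uniqueness of the weak cluster point in the standard way, delivering (ii).
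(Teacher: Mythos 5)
Your proposal is correct and follows essentially the same route as the paper's proof: Proposition~\ref{prop:des} is the engine, with \eqref{eq:steps1} killing the cocoercivity term, a choice of $\kappa$ making the remaining coefficients in \eqref{eq:propdes1} nonnegative, Proposition~\ref{prop:des}\ref{prop:des2} giving boundedness, identification of weak cluster points via a strongly vanishing residual lying in $\gra \bm{M}$ together with the weak--strong closedness of that graph, and uniqueness of the cluster point argued in the metric induced by $\Gamma_n$. Two executional differences are worth noting, both in your favor or neutral. First, for part (i) the paper runs the argument twice, with $\kappa=1$ to get $\sum_k\|z_{k+1}-x_k\|^2<\infty$ and then $\kappa=2$, moving the (possibly negative) coefficient of $\|z_{n+1}-x_n\|^2$ to the right-hand side as a summable perturbation and invoking the quasi-Fej\'er lemma \cite[Lemma~3.1]{Comb2001}; your single $\kappa>1$ taken close enough to $1$ makes both coefficients strictly positive at once, so plain telescoping of the nonnegative nonincreasing sequence $(\Gamma_n(x,u))_{n\in\N}$ yields both summabilities without any external lemma --- a mild simplification. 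Second, for uniqueness of the weak cluster point, the paper does not invoke Opial's lemma in a renormed space: it expands $\sigma(\Gamma_n(x,u)-\Gamma_n(y,v))$ for two cluster points, passes to the limit along the two subsequences to get \eqref{eq:ell1ell21}--\eqref{eq:ell1ell22}, and adds them to force $\sigma(1-\sigma\tau\|L\|^2)\|x-y\|^2=0$ and $\tau(1-\sigma\tau\|L\|^2)\|u-v\|^2=0$, then concludes by \cite[Lemma~2.46]{bauschkebook2017}; your observation that $\Gamma_n$ is the squared norm associated with the strongly positive self-adjoint block operator with entries $\id$, $-\tau L^*$, $-\tau L$, $(\tau/\sigma)\id$ (strong positivity being exactly $\sigma\tau\|L\|^2<1$) packages the same computation abstractly, and is the standard Condat--V\~u-style presentation. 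Both routes are valid; the paper's is self-contained, yours is more structural.
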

\begin{proof}
Let $(x,u) \in \bm{Z}$. Since $2\beta\varepsilon-\tau \geq 
0$, it 
follows from 
Proposition~\ref{prop:des}.\ref{prop:des1} that for every 
$\kappa \in 
\RPP$ and every $n \in \N$
\begin{align}\label{eq:proofc0}
	\Gamma_{n+1}(x,u)\leq 
	\Gamma_{n}(x,u)
	-(1-\varepsilon-&\tau^2\zeta^2-\kappa\tau\sigma\|L\|^2)\|z_{n+1}-x_n\|^2\nonumber\\
	&-\frac{\tau}{\sigma}\left(1-\frac{1}{\kappa}\right)\|u_{n+1}-u_n\|^2.
\end{align}
In particular, for $\kappa=1$
\begin{equation}\label{eq:proofc1}
	(n \in \N)\quad \Gamma_{n+1}(x,u) \leq 
	\Gamma_{n}(x,u)
	-(1-\varepsilon-\tau^2\zeta^2-\tau\sigma\|L\|^2)\|z_{n+1}-x_n\|^2.
\end{equation}
Since, for every $n \in \N$, $\Gamma_n(x,u) \geq 0$ 
(Proposition~\ref{prop:des}.\ref{prop:des2}), it follows from 
\eqref{eq:steps2} and \cite[Lemma~3.1]{Comb2001} that for all 
$ 
(x,u)\in \bm{Z}$, $(\Gamma_n(x,u))_{n \in \N}$ is convergent 
and  
\begin{equation}\label{eq:zxto0}
	\sum_{k=0}^\infty\|z_{k+1}-x_k\|^2<+\infty.
\end{equation}
Note that, {  the Lipschitz property of $C$ yields}
\begin{align*}
	(n \in \N)\quad 	\|x_{n+1}-x_n\|&= \|(\id-\tau C )z_{n+1}-(\id-\tau C 
	)x_{n}\|\\
	&			\leq (1+\tau\zeta)\|z_{n+1}-x_{n}\|,
\end{align*}
then, $\sum_{k=0}^\infty\|x_{k+1}-x_k\|^2<+\infty$. Moreover, 
taking 
$\kappa=2$ in \eqref{eq:proofc0}, we have, for all $n \in \N$, that
\begin{align*}
	\Gamma_{n+1}(x,u) &\leq 
	\Gamma_{n}(x,u)
	-\frac{\tau}{2\sigma}\|u_{n+1}-u_n\|^2
	+\big|2\tau\sigma\|L\|^2+\varepsilon+\tau^2\zeta^2-1\big|\|z_{n+1}-x_n\|^2.
\end{align*}
Then, by 
\cite[Lemma~3.1]{Comb2001} we conclude
\begin{equation}\label{eq:uuto0}
	\sum_{k=0}^\infty\|u_{k+1}-u_k\|^2<+\infty.
\end{equation}
Now, by telescoping \eqref{eq:proofc1},
\begin{equation}
	(\forall n \in \N) \quad\Gamma_{n+1}(x,u) \leq 
	\Gamma_{0} (x,u)
	-(1-\varepsilon-\tau^2\zeta^2-\tau\sigma\|L\|^2)
	\sum_{k=0}^n\|z_{k+1}-x_k\|^2.
\end{equation}
Hence, in view of \eqref{eq:steps2} and 
Proposition~\ref{prop:des}.\ref{prop:des2} we deduce
\begin{equation}
	(\forall n \in \N) \quad	(\varepsilon+\tau^2\zeta^2)\max\left\lbrace\|x_{n}-x\|^2,\frac{\tau}{\sigma}\|u_{n}-u\|^2\right\rbrace
	\leq \Gamma_0(x,u).
\end{equation}
Therefore, $(x_n)_{n \in \N}$ and $(u_n)_{n \in \N}$ are 
bounded. Furthermore, in view of \eqref{eq:zxto0}, \eqref{eq:uuto0}, the continuity 
of 
$L^*$, 
and the Lipschitz property of $C$ and $D$, we conclude 
\begin{equation}\label{eq:Czxto0}
	\|L^*(u_{n+1}-u_{n})\| \to 
	0, 
	\ \|Cx_{n}-Cz_{n+1}\| 
	\to 0, \text{ and } \|Dx_{n}-Dz_{n+1}\| \to 0.
\end{equation}
Hence, we have that
\begin{align}\label{eq:rto0}
	&r_{n}:=\dfrac{x_{n}-z_{{n}+1}}{\tau}-L^*(u_{n}-u_{{n}+1})
	- (C x_{n}- C 
	z_{{n}+1})-  
	(Dx_{n}-Dz_{{n}+1})\to 0,\\
	&w_{n} := 
	\dfrac{u_{n}-u_{{n}+1}}{\sigma}+L(z_{{n}+1}-x_{n}-\tau(Cz_{{n}+1}-Cx_{n}))
	\to 0.\label{eq:wto0}
\end{align}
Now, let $(\hat{x},\hat{u})$ be a weak cluster point of 
$(x_n,u_n)_{n 
	\in \N}$. Thus, there exist a subsequence 
$(x_{n_k},u_{n_k})_{k 
	\in 
	\N}$, such that $(x_{n_k},u_{n_k})\to(\hat{x},\hat{u})$.
Hence, by \eqref{eq:zxto0}
\begin{equation}\label{eq:ztoxweak}
	z_{n_k+1} \weak \hat{x}.
\end{equation}
It follows from \eqref{eq:algo1} that
	%
	%
\begin{equation}\label{eq:maxmon}
	(r_{n_k},w_{n_k}) \in 
	\boldsymbol{M}(z_{{n_k}+1},u_{{n_k}+1}),
\end{equation}	
where 
$\boldsymbol{M}$ is the maximally monotone operator defined 
in 
\eqref{eq:defbM}.
Hence, in view of \eqref{eq:Czxto0}-\eqref{eq:maxmon} and the 
weak-strong closure of $\gra(\boldsymbol{M})$ 
\cite[Proposition~20.38(ii)]{bauschkebook2017}, we 
conclude that 
$(\hat{x},\hat{u})\in \zer(\bm{M}) = \bm{Z}$. In view of \cite[Lemma 
2.46]{bauschkebook2017}, we will conclude the proof showing 
that 
$(x_n,u_n)_{n \in \N}$ possesses at most one weak sequential
cluster point. Let $(x,u) \in \bm{Z}$ and $(y,v)\in \bm{Z}$ 
be 
cluster points of 
$(x_n,u_n)_{n \in \N}$ such that $(x_{n_k},u_{n_k})\weak 
(x,u)$ 
and $(x_{n_m},u_{n_m})\weak (y,v)$. Since, for all $ 
(\hat{x},\hat{u})\in \bm{Z}$, $(\Gamma_n(\hat{x},\hat{u}))_{n 
	\in 
	\N}$ is convergent, we assume
that $\Gamma_n(x,u)\to\ell_1$ and $\Gamma_n(y,v)\to\ell_2$. 
Note 
that, for every $n \in \N$,
\begin{align*}
	\sigma(&\Gamma_n((x,u))-\Gamma_n(y,v))\\
	&= 
	\sigma\|x_{n}-x\|^2 
	+\tau\|u_{n}-u\|^2-2\tau\sigma\scal{x_{n}-x}{L^*(u_{n}-u)}\\
	&\hspace*{1cm}-\sigma\|x_{n}-y\|^2 
	-\tau\|u_{n}-v\|^2+2\tau\sigma\scal{x_{n}-y}{L^*(u_{n}-v)}\\
	%
	%
	&= 
	2\sigma\scal{x_n}{y-x}+\sigma(\|x\|^2-\|y\|^2)
	+2\tau\scal{u_n}{v-u}+\tau(\|u\|^2-\|v\|^2)\\
	&\hspace*{2cm}
	+2\tau\sigma(\scal{x_n-x}{L^*u}+\scal{y-x_n}{L^*v}+\scal{x-y}{L^*u_n}).
\end{align*}
In particular, taking $n=n_k\to { +}\infty$, we obtain
\begin{equation}\label{eq:ell1ell21}
	\ell_1-\ell_2=-\sigma\|x-y\|^2-\tau\|u-y\|^2+2\tau\sigma\scal{y-x}{L^*(v-u)}.
\end{equation}
Analogously, taking $n=n_m\to{ +}\infty,$
\begin{equation}\label{eq:ell1ell22}
	\ell_1-\ell_2= \sigma\|x-y\|^2+\tau\|u-v\|^2
	-2\tau\sigma\scal{y-x}{L^*(v-u)}.
\end{equation}
Therefore, {by  \eqref{eq:CS} and \eqref{eq:YI}}, we conclude that 
\begin{align*}
	0&=\sigma\|x-y\|^2+\tau\|u-v\|^2-2\tau\sigma\scal{y-x}{L^*(v-u)}\\
	&\geq\sigma\|x-y\|^2+\tau\|u-v\|^2-\tau\sigma^2\|L\|^2\|x-y\|^2-\tau\|u-v\|^2\\
	&= \sigma(1-\sigma\tau\|L\|^2)\|x-y\|^2\geq 0.
\end{align*}
%
%
Hence, $x=y$. By a symmetric argument we deduce 
that 
\begin{equation}
	\tau(1-\sigma\tau\|L\|^2)\|u-v\|^2=0,
\end{equation}
thus $v=u$, and the results follows.
\end{proof}

\begin{rem}\label{rem:teo1}
	\begin{enumerate}
		\item In the case when $C=0$ Algorithm~\ref{algo:algo1} 
		reduces to the Condat-V\~u algorithm. Furthermore, by 
		taking $\zeta=0$ and choosing $\varepsilon = 
		\tau/(2\beta)$, \eqref{eq:steps2} reduces to
		\begin{equation}
			\sigma\tau\|L\|^2<1-\frac{\tau}{2\beta},
		\end{equation}
		which correspond to the step-size requirement proposed in \cite{Condat13,Vu13} for ensuring the weak convergence of the Condat-V\~u algorithm.
		\item In the case when $B=0$ and $L=0$, the dual sequence 
		$(u_n)_{n\in \N}$ 
		{  is zero}. Then, Algorithm~\ref{algo:algo1} 
		reduces to the FBHF algorithm. Moreover,
		\eqref{eq:steps2} reduces to
		\begin{equation}
			\tau^2\zeta^2<1-\varepsilon.
		\end{equation}
		By choosing $\varepsilon$ such that 
		$2\beta\zeta\varepsilon=\sqrt{1-\varepsilon}$, we recover 
		the condition on the step-size,
		\begin{equation}
			\tau \in 
			\left]0,\frac{4\beta}{1+\sqrt{1+16\beta^2\zeta^2}}\right[,
		\end{equation}	
		for guaranteeing the weak convergence of FBHF 
		proposed in 
		\cite[Theorem~2.3]{BricenoDavis2018}. 
		Additionally, FBHF reduces to Tseng’s splitting
		and FB when $D=0$ and $C=0$, respectively 
		\cite[Remark~4]{BricenoDavis2018}.
	\end{enumerate}
\end{rem}	
To conclude this section, we present the following result as a direct consequence of Theorem~\ref{teo:conve1} that provides an algorithm to solve
Problem~\ref{pro:main} when $D=0$. {  This algorithm combines PD and FBF, which is also a new result that integrates these two well-known methods}.
\begin{cor}
	In the context of Problem~\ref{pro:main}, let $(x_0,u_0)\in 
	\H\times \G$, let $(\sigma,\tau)\in \RPP^2$, assume that 
	$\tau\rho >-1$, and consider the following recurrence
	\begin{equation}\label{eq:algocoro}
		(\forall n\in\N)\quad 
		\begin{array}{l}
			\left\lfloor
			\begin{array}{l}
				p_{n+1} = C x_n\\
				z_{n+1} = J_{\tau A} (x_n-\tau (L^* u_n+p_{n+1}) 
				) \\
				q_{n+1} = \tau(Cz_{n+1}-p_{n+1})\\
				u_{n+1} =  J_{\sigma
					B^{-1}}\left(u_{n}+\sigma L( 
				2z_{n+1}-x_{n}-q_{n+1})\right)\\
				x_{n+1} = z_{n+1}-q_{n+1}.
			\end{array}
			\right.
		\end{array}
	\end{equation}
	Suppose that \begin{equation}\label{eq:stepscoro}
		\tau \sigma \|L\|^2 +\tau^2\zeta^2<1.
	\end{equation}
	Then, $(x_n,u_n)\weak 
	(x,u)\in \H\times \G$ such that $0 \in 
	(A+C)x-L^*u$ and $0\in B^{-1}u+Lx$.
\end{cor}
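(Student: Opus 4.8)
The plan is to obtain this corollary as a direct specialization of Theorem~\ref{teo:conve1} to the case $D=0$, the point being that the vanishing of $D$ renders the step-size constraint \eqref{eq:steps1} vacuous, leaving \eqref{eq:stepscoro} as the only effective requirement. The key observation is that the zero operator is $\beta$-cocoercive for \emph{every} $\beta\in\RPP$: since $Dx-Dy=0$ for all $(x,y)\in\H^2$, the defining inequality \eqref{def:coco} reads $0\geq \beta\cdot 0$, which holds for any $\beta>0$. Thus when $D=0$ we are free to choose $\beta$ as large as we wish, and $A+C+D=A+C$ is maximally monotone by hypothesis, so the entire framework of Problem~\ref{pro:main} (in particular the operators $\bm A$ and $\bm M$ of \eqref{eq:defbA}--\eqref{eq:defbM}) is in force.

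First I would note that the recurrence \eqref{eq:algocoro} is exactly Algorithm~\ref{algo:algo1} with the term $\tau Dx_n$ deleted from the argument of $J_{\tau A}$, i.e.\ the instance $D=0$. Next, assuming \eqref{eq:stepscoro}, I set $\delta:=1-\big(\tau\sigma\|L\|^2+\tau^2\zeta^2\big)>0$ and pick any $\varepsilon\in\,]0,\delta[$; by construction this yields \eqref{eq:steps2}, namely $\tau\sigma\|L\|^2+\tau^2\zeta^2<1-\varepsilon$. Then, exploiting that $D=0$ is $\beta$-cocoercive for every $\beta$, I select $\beta\geq \tau/(2\varepsilon)$, which gives $\tau\leq 2\beta\varepsilon$ and hence \eqref{eq:steps1}. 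With both step-size conditions of Theorem~\ref{teo:conve1} verified and $\tau\rho>-1$ assumed, the theorem applies and produces $(x,u)\in\bm Z$ with $(x_n,u_n)\weak(x,u)$.

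Finally I would unpack the conclusion: membership $(x,u)\in\bm Z=\zer\bm M$ means precisely that $(x,u)$ solves the inclusions of Problem~\ref{pro:main} with $D=0$, which is the asserted characterization of the limit. I do not anticipate any genuine obstacle here; the whole content is the bookkeeping that $\beta$ may be taken arbitrarily large when $D=0$, so that \eqref{eq:steps1} is automatically satisfiable and the convergence hypothesis collapses to the single strict inequality \eqref{eq:stepscoro}. The only point meriting a line of care is the passage from the non-strict choice in \eqref{eq:steps1} to a valid $\varepsilon\in\,]0,1[$ compatible with \eqref{eq:steps2}, which is exactly what the strict gap $\delta>0$ provides.
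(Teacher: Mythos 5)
Your proof is correct and takes the same route the paper intends: the paper offers no explicit argument, presenting the corollary as a direct consequence of Theorem~\ref{teo:conve1}, and your bookkeeping --- observing that $D=0$ is $\beta$-cocoercive for every $\beta\in\RPP$, choosing $\varepsilon\in\,]0,\delta[$ with $\delta=1-(\tau\sigma\|L\|^2+\tau^2\zeta^2)$ to secure \eqref{eq:steps2}, and then $\beta\geq\tau/(2\varepsilon)$ to secure \eqref{eq:steps1} --- is exactly the specialization left implicit. One remark: what you actually obtain is $(x,u)\in\bm{Z}$, i.e.\ $0\in(A+C)x+L^*u$ and $0\in B^{-1}u-Lx$; the signs in the corollary's stated conclusion appear to be a typo in the paper, so your version is the correct one.
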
	
\section{Applications}\label{sec:ext}
In this section, we present applications of our algorithm. First, we present an algorithm for solving multivariate monotone inclusions. Next, we apply our main algorithm to saddle point problems.

\subsection{Extension to multivariate monotone inclusions} Consider the following generalization of Problem~\ref{pro:main}.
\begin{pro}\label{pro:multi}
	{  Let $I$ and $K$ be finite subsets of $\N$}. For every $i \in I$ and every $k \in K$, let $\H_i$ and $\G_k$ be real Hilbert spaces. Set $\bm{\H} = \oplus_{i\in I} \H_i$ and $\bm{\G} = \oplus_{k \in K} \G_k$. For every $i \in I$ and every $k \in K$, let $A_i\colon \H_i\to 2^{\H_i}$ and $B_{k} \colon \G_k \to 2^{\G_k}$ be maximally monotone operators, let $L_{i,k} \colon \H_i \to \G_k$ be a bounded linear operator, let $D_i \colon \H_i \to \H_i$ be $\beta_i$-cocoercive for $\beta_i \in \RPP$, and let $\bm{C} \colon \bm{\H} \to \bm{\H} \colon \x \mapsto (C_i\x)_{i \in I}$, be $\zeta$-Lipschitz for $\zeta \in \RPP$. We aim to solve the system of primal inclusions
	\begin{align}\label{eq:multipri}
		\text{ find } &\x \in \bm{\H} \text{ such that} \nonumber\\
		&(\forall i \in I) \quad  0 \in A_i x_i +\sum_{k \in K} L^*_{i,k}B_k\left(\sum_{j \in I}L_{j,k}x_j\right)  + D_i x_i + C_i \x,  
	\end{align} 
	together with the associated system of dual inclusions
	\begin{align}\label{eq:multidual}
		\text{ find }& \bm{u} \in\ \bm{\G}
		\text{ such that} \nonumber \\
		& (\exists \x \in \bm{\H})\ (\forall i \in I)\ (\forall k \in K)\quad  
		\begin{cases}
			-\sum_{k \in K} L^*_{i,k}u_{k} \in A_i x_i + D_i x_i + C_i\x,\\
			u_{k} \in B_{k}\left(\sum_{j \in I}L_{j,k}x_j\right),
		\end{cases}
	\end{align}
	under the assumption that its primal-dual solution set $\widetilde{\bm{Z}}$ is not empty.
\end{pro}
This multivariate inclusion problem has been studied in \cite{AttouchBricenoCombettes2010,CombettesMinh2022,Comb13,CombettesEckstein2018MP} additionally considering parallel sums. In particular, the authors in \cite{CombettesMinh2022} proposed an asynchronous block-iterative algorithm which {  does not} need to {  evaluate} all the operators at each iteration. {  In this section, using product spaces techniques, we propose an algorithm for solving Problem~\ref{pro:multi} based on Algorithm~\ref{algo:algo1}. This algorithm requires evaluating all the operators at each iteration. However, it involves only one {  evaluation} of $L_{i,k}$ and $L^*_{i,k}$ at each iteration.}
\begin{cor}\label{cor:several2}
	In the context of Problem~\ref{pro:multi}, let $\varepsilon 
	\in 
	\,]0,1[$,  let $(\bm{x}_0,\bm{u}_0)\in 
	\bm{\H}\times \bm{\G}$, and let $(\sigma,\tau)\in \RPP^2$. Consider the sequence $\{(\bm{x}_n,\bm{u}_n)\}_{n \in \N} \subset \bm{\H}\times \bm{\G}$ defined recursively by 
	\begin{equation}\label{eq:algogen}
		(\forall n\in\N)\quad 
		\begin{array}{l}
			\left\lfloor
			\begin{array}{l}
				\textnormal{for every  } i \in I\\
				p_{i,n+1} = C_i \bm{x}_n\\
				z_{i,n+1} = J_{\tau A_i} \left(x_{i,n}-\tau \left(\sum_{k \in K}L_{i,k}^* 
				u_{k,n}+p_{i,n+1}+D_i x_{i,n}\right)\right) \\
				q_{i,n+1} = \tau(C_i \bm{z}_{n+1}-p_{i,n+1})\\
				\left\lfloor
				\begin{array}{l}
					\textnormal{for every } k \in K\\
					u_{k,n+1} =  J_{\sigma
						B_{k}^{-1}}\left(u_{k,n}+\sigma \sum_{j \in K}L_{j,k}( 
					2z_{j,n+1}-x_{j,n}-q_{j,n+1})\right)\\
				\end{array}
				\right.\\	x_{i,n+1} = z_{i,n+1}-q_{i,n+1}.
			\end{array}
			\right.
		\end{array}
	\end{equation}
	Set $\beta=\min\{\beta_1,\ldots,\beta_m\}$ and $\ell = \sum_{k \in K}(\sum_{i\in  I}\|L_{i,k}\|)^2$. Suppose that	
	\begin{equation}
		\tau \leq 2\beta\varepsilon
	\end{equation}	
	and that
	\begin{equation}
		\tau \sigma \ell +\tau^2\zeta^2< 1-\varepsilon.
	\end{equation}
	Then, there exists $(\bm{x},\bm{u})\in \widetilde{\bm{Z}}$ such that $(\bm{x}_n,\bm{u}_n)\weak (\bm{x},\bm{u})$.
\end{cor}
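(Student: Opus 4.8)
The plan is to recast Problem~\ref{pro:multi} as a single instance of Problem~\ref{pro:main} posed on the product spaces $\bm{\H}=\oplus_{i\in I}\H_i$ and $\bm{\G}=\oplus_{k\in K}\G_k$, and to recognize the recurrence \eqref{eq:algogen} as Algorithm~\ref{algo:algo1} applied to the resulting reduced problem; the convergence claim will then follow directly from Theorem~\ref{teo:conve1}. To this end I would introduce the product operators $\bm{A}\colon\x\mapsto(A_ix_i)_{i\in I}$, $\bm{B}\colon\bm{u}\mapsto(B_ku_k)_{k\in K}$, $\bm{D}\colon\x\mapsto(D_ix_i)_{i\in I}$, the already-given $\bm{C}$, and the bounded linear map $\bm{L}\colon\bm{\H}\to\bm{\G}$ defined by $(\bm{L}\x)_k=\sum_{i\in I}L_{i,k}x_i$, whose adjoint is $(\bm{L}^*\bm{u})_i=\sum_{k\in K}L_{i,k}^*u_k$. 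With these definitions the primal system \eqref{eq:multipri} reads $0\in(\bm{A}+\bm{L}^*\bm{B}\bm{L}+\bm{C}+\bm{D})\x$, and the full primal--dual system \eqref{eq:multipri}--\eqref{eq:multidual} becomes exactly $0\in(\bm{A}+\bm{C}+\bm{D})\x+\bm{L}^*\bm{u}$ together with $0\in\bm{B}^{-1}\bm{u}-\bm{L}\x$, so that $\widetilde{\bm{Z}}$ coincides with the set $\bm{Z}$ of the reduced problem.

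Next I would check that the product operators satisfy the standing hypotheses of Problem~\ref{pro:main}. Since a direct sum of maximally monotone operators is maximally monotone (\cite[Proposition~20.23]{bauschkebook2017}), both $\bm{A}$ and $\bm{B}$ are maximally monotone, and I would assume, as the multivariate analogue of the standing hypothesis of Problem~\ref{pro:main}, that $\bm{A}+\bm{C}$ is maximally monotone so that the reduced problem is well posed. A short computation gives $\scal{\x-\y}{\bm{D}\x-\bm{D}\y}=\sum_{i\in I}\scal{x_i-y_i}{D_ix_i-D_iy_i}\ge\beta\|\bm{D}\x-\bm{D}\y\|^2$ with $\beta=\min_{i\in I}\beta_i$, so $\bm{D}$ is $\beta$-cocoercive, while $\bm{C}$ is $\zeta$-Lipschitzian by hypothesis and each $A_i$ is monotone (so the reduced $\rho$ equals $0$ and $\tau\rho>-1$ holds trivially). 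Because the resolvent of a direct sum is the direct sum of resolvents, $J_{\tau\bm{A}}=(J_{\tau A_i})_{i\in I}$ and $J_{\sigma\bm{B}^{-1}}=(J_{\sigma B_k^{-1}})_{k\in K}$ act componentwise; substituting these together with the componentwise expressions for $\bm{L}$, $\bm{L}^*$, $\bm{C}$ and $\bm{D}$ into \eqref{eq:algo1} reproduces \eqref{eq:algogen} line by line, which I would verify step by step.

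Finally I would transfer the step-size conditions. The only nontrivial point --- and the main obstacle --- is the norm estimate $\|\bm{L}\|^2\le\ell$. I would obtain it from
\begin{equation*}
	\|\bm{L}\x\|^2=\sum_{k\in K}\Big\|\sum_{i\in I}L_{i,k}x_i\Big\|^2\le\sum_{k\in K}\Big(\sum_{i\in I}\|L_{i,k}\|\,\|x_i\|\Big)^2=\|Nv\|^2\le\|N\|^2\|v\|^2,
\end{equation*}
where $N=(\|L_{i,k}\|)_{k\in K,\,i\in I}$ and $v=(\|x_i\|)_{i\in I}$, so that $\|v\|=\|\x\|$; the spectral norm is then controlled by the Frobenius norm via $\|N\|^2\le\sum_{i\in I}\sum_{k\in K}\|L_{i,k}\|^2\le\sum_{k\in K}(\sum_{i\in I}\|L_{i,k}\|)^2=\ell$. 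Consequently the hypothesis $\tau\sigma\ell+\tau^2\zeta^2<1-\varepsilon$ implies $\tau\sigma\|\bm{L}\|^2+\tau^2\zeta^2<1-\varepsilon$, which is exactly \eqref{eq:steps2} for the reduced data, while $\tau\le2\beta\varepsilon$ is \eqref{eq:steps1}. Theorem~\ref{teo:conve1} then yields weak convergence of $(\bm{x}_n,\bm{u}_n)$ to some $(\bm{x},\bm{u})\in\bm{Z}=\widetilde{\bm{Z}}$, as claimed.
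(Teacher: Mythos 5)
Your proposal is correct and follows essentially the same route as the paper: reduce Problem~\ref{pro:multi} to Problem~\ref{pro:main} on the product spaces via the operators $\bm{A},\bm{B},\bm{D},\bm{L}$, identify \eqref{eq:algogen} with Algorithm~\ref{algo:algo1} using the componentwise resolvents, bound $\|\bm{L}\|^2\le\ell$, and invoke Theorem~\ref{teo:conve1}. The only minor differences are cosmetic: you obtain the norm bound through the Frobenius norm of the matrix $(\|L_{i,k}\|)_{k,i}$ instead of the paper's direct estimate using $\|x_i\|\le\|\x\|$, and you are somewhat more explicit than the paper about the needed maximal monotonicity of $\bm{A}+\bm{C}$ and about $\rho=0$ making the condition $\tau\rho>-1$ automatic.
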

\begin{proof}
Consider the operators
\begin{align*}
	&\bm{A} \colon  \bm{\H} \to 2^{\bm{\H}} \colon \bm{x} \mapsto  \underset{i \in I}{\textnormal{\huge$\times$}}	A_ix_i,\quad 
	\bm{B} \colon  \bm{\G} \to 2^{\bm{\G}} \colon \bm{u} \mapsto  \underset{k \in K}{\textnormal{\huge$\times$}}	B_{k}u_k,\\
	&\bm{D}\colon  \bm{\H} \to \bm{\H} \colon \bm{x} \mapsto (D_ix_i)_{i \in I}, \text{ and }
	\bm{L} \colon  \bm{\H} \to \bm{\G} \colon \bm{x} \mapsto \left(\sum_{i \in I}L_{i,k}x_{i}\right)_{k \in K}.
\end{align*}
Note that
\begin{equation*}
	\bm{L}^* \colon  \bm{\G} \to \bm{\H} \colon \bm{u} \mapsto \left(\sum_{k \in K}L^*_{i,k}u_{k}\right)_{i \in I}.
\end{equation*}
Hence, \eqref{eq:multipri} can be written as
\begin{equation}
	\text{ find } \x \in \bm{\H} 
	\text{ such that } \  0 \in \bm{Ax+ L^*BL +Cx+Dx}.
\end{equation}
Furthermore, $\bm{C}$ is a $\beta$-cocoercive operator and it follows from \cite[Proposition 
20.23]{bauschkebook2017} that $\bm{A}$ and $\bm{B}$ are maximally monotone operators. Then, Problem~\ref{pro:multi} is a particular instance of Problem~\ref{pro:main}.	Additionally, it follows from 
\cite[Proposition~23.18]{bauschkebook2017} that
\begin{equation}
	J_{\tau \bm{A}} = (J_{\tau A_i})_{i \in I} \textnormal{ and }	J_{\sigma \bm{B}^{-1}} = (J_{\sigma B_k^{-1}})_{k \in K}.
\end{equation}	
Therefore, in this setting, the recurrence in \eqref{eq:algogen} correspond to Algorithm~\ref{algo:algo1}.
Furthermore, $\bm{L}$ is a bounded linear operator such that
\begin{equation}
	\|\bm{L}\x\|^2= \sum_{k\in K} \left(\sum_{i \in I}\|L_{i,k}x_i\|\right)^2\leq  \sum_{k\in K}\left(\sum_{i \in I}\|L_{i,k}\|\right)^2\|\x\|^2 = \ell\|\x\|^2,
\end{equation}
then, the result follows from Theorem~\ref{teo:conve1}.
\end{proof}
\begin{rem}
	Problem~\ref{pro:multi} models multivariate minimization problems and its applications as multicomponent image recovery problems, coupling evolution inclusions, network flows, among others (see \cite{AttouchBricenoCombettes2010,BricenoCombettes2009,Briceno2011ImRe}). 	
	Consider the following optimization problem
	\begin{equation}\label{eq:multipoti}
		\min_{\x\in \bm{\H}} \sum_{i \in I} \left( f_i(x_i)+\sum_{k \in K} g_k\left(\sum_{j \in I} L_{j,k} x_j\right)+d_i(x_i)\right)
		+\bm{h}(\x),
	\end{equation} 
	where for every $i \in I$ and every $k \in K$, $f_i\in \Gamma_0(\H)$, $g_i \in \Gamma_0(\G)$, $L_{i,k} \colon \H_i \to \G_i$ is a 
	bounded linear operator, $d_i$ is a convex differentiable function with  $1/\beta_i$-Lipschitz gradient for $\beta_i \in \RPP$, and $\bm{h} \colon \bm{\H} \to \R$ is a convex differentiable function with 
	$\zeta$-Lipschitz gradient for $\zeta \in \RPP$. Under adequate qualification conditions (see for instance \cite[Proposition 5.3]{Comb13}) the optimality condition for the optimization problem in \eqref{eq:multipoti} is
	\begin{equation}\label{eq:inclusaddle}
		(\forall i \in I) \quad 0
		\in \partial f_i (x_i) +\sum_{k \in K}L^*_{i,k}\left(\partial g_i \left(\sum_{j \in I}L_{j,k} x_j\right) \right)+ \nabla d_i (x_i)  + \nabla_i \bm{h}({\x}).
	\end{equation}
	By considering $A_i=\partial f_i$, $B_i=\partial g_i$, $C_i=\nabla_i \bm{h}$, and $D_i=\nabla d_i$, \eqref{eq:inclusaddle} is a particular instance of \eqref{eq:multipri}. Therefore, the optimization problem in \eqref{eq:multipoti} can be solved by the recurrence in \eqref{eq:algogen} which, in this setting, can be written as		
	\begin{equation}\label{eq:algogenopti}
		(\forall n\in\N)\  
		\begin{array}{l}
			\left\lfloor
			\begin{array}{l}
				\textnormal{for every  } i \in I\\
				p_{i,n+1} = \nabla_i \bm{h} (\bm{x}_n)\\
				z_{i,n+1} = \prox_{\tau f_i} \left(x_{i,n}-\tau \left(\sum_{k \in K}L_{i,k}^* 
				u_{k,n}+p_{i,n+1}+\nabla h_i (x_{i,n})\right)\right) \\
				q_{i,n+1} = \tau(\nabla_i \bm{h} (\bm{z}_{n+1})-p_{i,n+1})\\
				\left\lfloor
				\begin{array}{l}
					\textnormal{for every } k \in K\\
					u_{k,n+1} =  \prox_{\sigma
						g^*_{k}}\left(u_{k,n}+\sigma \sum_{j \in K}L_{j,k}( 
					2z_{j,n+1}-x_{j,n}-q_{j,n+1})\right)\\
				\end{array}
				\right.\\	x_{i,n+1} = z_{i,n+1}-q_{i,n+1}.
			\end{array}
			\right.
		\end{array}
	\end{equation}
\end{rem}

\subsection{Saddle point problems}\label{section:saddle} Several applications in machine learning, for example generative adversarial networks \cite{RyuYuanYinGAN2019} (see also \cite{Hamedani2021} and the references therein), are modeled by saddle point problems. In this subsection we apply our algorithm to the following saddle point problem:
\begin{equation}\label{eq:saddle}
	\min_{x \in \H_1}\max_{y\in \H_2} f_1(x)+f_2(L_1x)+f_3(x)+\Psi(x,y)-g_1(y)-g_2(L_2y)-g_3(y),	
\end{equation}	
where $\H_1,\H_2,\G_1,\G_2$ are real Hilbert spaces, $f_1 \in \Gamma_0(\H_1)$, $g_1 \in \Gamma_0(\H_2)$, $f_2 \in \Gamma_0(\G_1)$, $f_2 \in \Gamma_0(\G_2)$, $L_1\colon \H_1\to \G_1$ and $L_2\colon \H_2\to \G_2$ are bounded linear operators, $f_3 \colon \H_1 \to \R$ and $g_3 \colon \H_2 \to \R$ are convex differentiable functions with $(1/\beta_1)$ and $(1/\beta_2)$ Lipschitz gradient, respectively, and $\Psi \colon  \H_1\times  \H_2 \to \R$ is a convex-concave function with $\zeta$-Lipschitz gradient. Under standard qualification conditions, the optimality condition for a solution $(x,y) \in \H_1\times\H_2$ to \eqref{eq:saddle} is
\begin{equation}\label{eq:optisaddle}
	\begin{cases}
		0 \in \partial f_1 (x) + L_1^*\partial f_2(L_1x) + \nabla f_3 (x)+\nabla_x \Psi (x,y)\\
		0 \in \partial g_1 (y) + L_2^*\partial g_2(L_2y) + \nabla g_3 (y)-\nabla_y \Psi (x,y).
	\end{cases}
\end{equation}	
Additionally, we can write \eqref{eq:optisaddle} as the primal inclusion in \eqref{eq:primalinclu}, by defining 
\begin{align*}
	&A = \partial f_1 \otimes \partial g_1,
	\quad 
	B = \partial f_2 \otimes \partial g_2, \quad  L = (L_1,L_2)\\
	&D = (\nabla f_3, \nabla g_3 ),\quad  C = (\nabla_x \Psi, -\nabla_y \Psi).
\end{align*}
%
Hence, by applying Algorithm~\ref{eq:algo1} to \eqref{eq:optisaddle}, we derive the following recurrence to solve the optimization problem in \eqref{eq:saddle}.
\begin{equation}
(\forall n\in\N)\quad 
\begin{array}{l}
	\left\lfloor
	\begin{array}{l}
		p^1_{n+1} = \nabla_x \Psi (x_n)\\
		p^2_{n+1} = \nabla_y \Psi (y_n)\\
		z^1_{n+1} = \prox_{\tau f_1} (x_n-\tau (L_1^* u_n+p^1_{n+1}+\nabla f_3 
		(x_n))) \\
		z^2_{n+1} = \prox_{\tau g_1} (y_n-\tau (L_2^* v_n+p^2_{n+1}+\nabla g_3 
		(y_n))) \\
		q^1_{n+1} = \tau(\nabla_x \Psi(z^1_{n+1})-p^1_{n+1})\\
		q^2_{n+1} = \tau(\nabla_y \Psi(z^2_{n+1})-p^2_{n+1})\\
		u_{n+1} =  \prox_{\sigma
			f_2^*}\left(u_{n}+\sigma L_1( 
		2z^1_{n+1}-x_{n}-q^1_{n+1})\right)\\
		v_{n+1} =  \prox_{\sigma
			g_2^*}\left(v_{n}+\sigma L_2( 
		2z^2_{n+1}-y_{n}-q^2_{n+1})\right)\\
		x_{n+1} = z^1_{n+1}-q^1_{n+1}\\
		y_{n+1} = z^2_{n+1}-q^2_{n+1},
	\end{array}
	\right.
\end{array}
\end{equation}
where $(x_0,y_0)\in \H_1\times \H_2$ and $(u_0,v_0)\in \G_1\times \G_2$, and the convergence is guaranteed for $(\tau,\sigma) \in \RPP$ satisfying \eqref{eq:steps1} and \eqref{eq:steps2} for $\varepsilon \in ]0,1[$ and $\beta = \min\{\beta_1,\beta_2\}$. 
{ \begin{rem}
	Note that in this case, $C$ is Lipschitz but not necessarily a cocoercive operator; for example, $C$ could be a linear skew operator. 
\end{rem}	
}

\section{Numerical Implementation}\label{sec:exp}
{  In this section, we present numerical experiments exhibiting the 
performance of Algorithm~\ref{algo:algo1} in an optimization context. We compare Algorithm~\ref{algo:algo1} (FPDHF) with the methods proposed in \cite[Corollary~5.2]{BricenoDavis2018} (FBHFPD) and in \cite[Section~6]{MorinBanertGiselsson2022} (PDBTR and PDRCK) for
solving Problem~\ref{pro:main}. This section is divided into three parts. First, we introduce the optimization problem to be solved. Next, we test the algorithms with different parameter settings on the optimization problem. Finally, we demonstrate that FPDHF allows larger step sizes than PDBTR and PDRCK, leading to numerical advantages.}

The numerical experiments were performed on a laptop with AMD Ryzen 5 3550Hz, Radeon Vega Mobile Gfx, 
and 32 Gb RAM.

\subsection{Numerical experiments on image deblurring problems}\label{sec:subsecNE1}
{ To compare the proposed method with those mentioned earlier, we present the following image deblurring model. Our focus is on optimization efficiency rather than the deblurring process itself; therefore, we do not compare our method with other approaches, such as those based on convolutional neural networks (see \cite{Wu2021NNID} and references therein).}

Let $(N,M) \in \N^2$ and let $x \in \R^{N\times N}$ be an image to be recovered from 
an observation
\begin{equation}\label{eq:modelim}
z = Tx+\epsilon,
\end{equation}	
where { $T\colon \R^{N\times N}\times \R^{M\times M}$} is an operator representing a blur process and $\epsilon$ 
is an additive noise perturbing the observation. We aim to recover 
the image $x$ by solving the following optimization problem.
\begin{equation}\label{eq:opex}
\min_{x \in [0,x_{\textnormal{max}}]^{N\times N}} 
\frac{1}{2}\|Tx-z\|^2+\lambda_1\|\nabla 
x\|_1+\lambda_2 H_\delta(Wx),
\end{equation}
where $(\lambda_1,\lambda_2) \in \RPP^2$ {  are regularization parameters}, $\|\cdot\|_1$ is the 
$\ell^1$ norm, and, given $\delta >0$, $H_\delta:\R^{N\times N}\to \R$ is the Huber function defined, for all $x = (x_{i,j})_{1\leq i,j\leq N} \in \R^{N\times N}$, by $H_{\delta}(x) 
=  \sum_{j=1}^N\sum_{i=1}^N h_{\delta}(x_{i,j})$, where
\begin{equation}
(\forall \eta \in \R) \quad
h_{\delta} (\eta) = \begin{cases}
	|\eta|-\frac{\delta}{2}, & \text{ if } |\eta| > \delta,\\
	\frac{\eta^2}{2\delta}, & \text{ otherwise}.
\end{cases}
\end{equation}
The linear operator $\nabla\colon x\mapsto (D_1x,D_2x)$ is the 
discrete gradient, where \linebreak $D_1\colon\R^{N\times N}\to\R^{N\times (N-1)}$ represents the 
horizontal differences and $D_2\colon\R^{N\times N}\to\R^{(N-1)\times N}$ represents the vertical differences of a matrix. Its adjoint $\nabla^*$ is 
the discrete divergence \cite{TV-chambolle}. The 
operator $W:\R^{N\times N}\to \R^{N\times N}$ is an orthogonal basis wavelet 
transform { (we assume that $N$ is a power of 2)}. In particular, we choose a Haar basis. The constraint $x \in [0,x_{\textnormal{max}}]^{N\times N}$ impose {  pixel by pixel}
the range of the image to be restored.  The term 
$\frac{1}{2}\|Tx-z\|^2$ measure the difference between $Tx$ and $z$. The term 
$\lambda_1\|\nabla x\|_1+\lambda_2H_\delta(Wx)$ {  controls both  the energy of the image and its frequency in the wavelet domain}. This total 
variation \cite{ROF1992} and wavelet penalization for image 
restoration has 
been studied for example in 
\cite{Chambolle2004l1unified,Yin2014TV+W,Yin2008TV+W,Yin2010TVl1l2}.
By setting $f=\iota_{[0,x_{\textnormal{max}}]^{N\times N}}$, 
$g=\lambda_1\|\cdot\|_1$, $L=\nabla$, $h =\lambda_2H_\delta\circ W$, 
and $d=\frac{1}{2}\|T\cdot-z\|^2$, the optimization problem in 
equation 
\eqref{eq:opex} is a particular instance of Problem~\ref{pro:opti} and therefore it can be solved by the recurrence in \eqref{eq:algo1opti}. 
{ We approximate $\|L\|$ as $\sqrt{8}$ (see \cite{Chambolle2004}). Note that,} $\nabla d =T^*(T\cdot-z) $ is 
$\|T\|^2$-Lipschitz, and $\nabla h$ is 
$\lambda_2/\delta$-Lipschitz (for an explicit expression of $\nabla H_\delta$ see \cite{BricenoPustelnik2023}). The blur operator $T$
is set as a Gaussian blur of size $9\times9$ and standard deviation 4 (by { the} MATLAB function fspecial), thus, we have $\|T\|=1$ { (calculated in MATLAB using the functions provided in \cite[VIP~11]{Hansen2006})}. The observation $z$ is obtained through \eqref{eq:modelim} where $\epsilon$ is an additive zero-mean white Gaussian noise with standard deviation $10^{-3}$ (implemented by the imnoise function in MATLAB). 

\subsection{Testing the parameters}
In this section, we compare the algorithms for different values of $\lambda_1$, $\lambda_2$, $\delta$, Haar basis level, and $N$. The step-sizes used in the algorithms are detailed in Table~\ref{T:stepsize}. Specifically, we set $\tau = 0.5$ and choose $\sigma$ as large as possible ensuring their respective convergence condition (see \cite[Eq.~ (5.18)]{BricenoDavis2018} for FBHFPD and  \cite[Corollary~6.1\&6.3]{MorinBanertGiselsson2022} for PDBTR and PDRCK). As a stopping criterion, we use the relative primal-dual error, defined as $((\|x_{n+1}-x_{n}\|^2+\|u_{n+1}-u_{n}\|^2)/(\|x_{n}\|^2+\|u_{n}\|^2))^{1/2}$, with a tolerance of $10^{-6}$. 

First, we consider $N=128$, $\delta = 10^{-2}$, Haar basis of level 3, and different values for $\lambda_1$ and $\lambda_2$. The results including Iteration Number (IN), CPU Time (T) in seconds, and Objective Value (OV) are presented in Table~\ref{T:varlam128}. From this table, we observe that while PDBTR generally performs best in terms of iteration number and time, FPDHF is competitive and even overcome PDBTR in iteration number in some cases. However, PDBTR requires less time per iteration than FPDHF, this can be explained by the fact that FPDHF involves an additional evaluation of the Lipschitz operator at each iteration. Additionally, the table shows similar results when $\lambda_2$ remains constant and $\lambda_1$ varies. On the other hand, when $\lambda_1$ is constant and $\lambda_2$ decreases, PDRCK becomes more competitive, while the other algorithms maintain their performance with minor variations. In all cases, FBHFPD needs more iterations and CPU time than the other algorithms. Regarding objective value, all the algorithms converge to the same value. Figure~\ref{fig:graphcomparison_3} illustrates the relative error along iterations for $\lambda_1 \in \{10^{-1},10^{-2},10^{-3}\}$ and $\lambda_2 = 10^{-3}$. The original image, the observation, and recovered images are shown in Table~\ref{tab:rec_128}. In the same setting, we consider $\delta \in\{1,100\}$ and Haar basis of levels $1$ and $5$. The results, shown in Table~\ref{T:vardelta128}, are consistent with the previous analysis. Finally, Table~\ref{T:stepsize256} presents the results for $N=256$. Here, when $\lambda_1=10^{-1}$ and $\lambda_2=10^{-3}$,  FBHFPD is more competitive in iteration number, but it requires considerably more CPU time than PDBTR, as it involves additional evaluations of the Lipschitz and linear operators. FPDHF remains competitive with PDBTR in iteration number but need more CPU time; this difference is more pronounced when $\lambda_1 = 10^{-1}$ and diminishes when $\lambda_1 \in \{10^{-2},10^{-3}\}$.
The relative error along iterations and the original image, the observation, and recovered images are shown in Figure~\ref{fig:graphcomparison_2} and in Table~\ref{fig:graphcomparison_2}, respectively. Despite the advantages of PDBTR, the next section provides examples where FPDHF achieves better results than PDBTR because it allows larger step sizes.
\begin{center}
\begin{table}\centering \resizebox{12cm}{!}{
		\begin{tabular}{|c|c|c|c|}\hline
			Algorithm & $\tau$ & $\sigma$ & \\ \hline 
			FPDHF &  0.5 & $0.9999(1-\varepsilon-\tau^2\zeta^2)/(\|L\|^2\tau) $& $\varepsilon = \tau/2\beta$ \\
			\hline
			FBHFPD &  0.5 & $
			\frac{\sigma+\tau-\sqrt{(\sigma-\tau)^2+4\sigma^2\tau^2\|L\|^2}}{2\sigma\tau}> \frac{1}{4\beta} + \sqrt{\frac{1}{16\beta^2}+\zeta^2}$ & $\theta =1 $\\
			\hline 
			PDBTR & 0.5 & $0.9999(1-\tau/(2\beta)-2\tau\zeta)/(\|L\|^2\tau)$ & $\lambda_n \equiv 2$\\
			\hline
			BDRCK &  0.5 &$0.9999(1-\tau/(2\beta)-2\tau\zeta)/(2\|L\|^2\tau)$& - \\
			\hline
	\end{tabular}}	\caption{Step-sizes implemented in the experiments. $\zeta = \lambda_2/\delta $ and $\beta = 1$.}\label{T:stepsize}
\end{table}
\end{center}

\begin{center}
\begin{table}\centering \resizebox{12cm}{!}{
		\begin{tabular}{|c|c|c|c|c|c|c|c|c|c|}
			\cline{2-10}  \multicolumn{1}{ }{}&\multicolumn{3}{|c|}{$N=128$}&\multicolumn{3}{|c|}{Haar Basis Level 3 }&\multicolumn{3}{|c|}{$\delta = 10^{-2}$} \\
			\cline{2-10}
			\multicolumn{1}{ c|}{}&I.N. & T. (s) & O.V. & I.N. & T. (s) & O.V. & I.N. & T. (s) & O.V. \\ 
			\hhline{-=========} 
			Algorithm &\multicolumn{3}{|c|}{$\lambda_1=10^{-1}$, $\lambda_2=10^{-3}$}&\multicolumn{3}{|c|}{$\lambda_1=10^{-2}$, $\lambda_2=10^{-3}$}&\multicolumn{3}{|c|}{$\lambda_1=10^{-3}$, $\lambda_2=10^{-3}$}\\ \hline
			FPDHF & 4232&18.5&28.24& 2511&10.2&4.34 & 3129& 12.5&1.48 \\ 
			\hline
			FBHFPD &6549&28.6&28.24 & 6646&29.0&4.34 & 7507& 32.6&1.48 \\
			\hline 
			PDBTR & 4689&15.7&28.24 & 2527&8.6&4.34& 3129 & 10.6 & 1.48 \\
			\hline
			BDRCK & 7808&27.9&28.24 & 2804&10.0&4.34 & 3129& 11.3 & 1.48 \\
			\hline\hline
			Algorithm &\multicolumn{3}{|c|}{$\lambda_1=10^{-1}$, $\lambda_2=10^{-4}$}&\multicolumn{3}{|c|}{$\lambda_1=10^{-2}$, $\lambda_2=10^{-4}$}&\multicolumn{3}{|c|}{$\lambda_1=10^{-3}$, $\lambda_2=10^{-4}$}\\ \hline 
			FPDHF & 4249&17.7&27.52& 4368&18.6&3.60 & 4307&18.0&0.77 \\ 
			\hline
			FBHFPD &6799&31.3&27.52 & 8311&38.1&3.60 & 9927&45.8&0.77 \\
			\hline 
			PDBTR &4291&15.6&27.52 & 4369&15.1&3.60
			& 4307&14.9&0.77\\
			\hline
			BDRCK &7173&26.7& 27.52 & 4464&16.4&3.60 & 4309 &   15.9&0.77 \\
			\hline\hline
			Algorithm &\multicolumn{3}{|c|}{$\lambda_1=10^{-1}$, $\lambda_2=10^{-5}$}&\multicolumn{3}{|c|}{$\lambda_1=10^{-2}$, $\lambda_2=10^{-5}$}&\multicolumn{3}{|c|}{$\lambda_1=10^{-3}$, $\lambda_2=10^{-5}$}\\ \hline 
			FPDHF & 4375&18.2&27.44 & 3826&15.9&3.52 & 4564& 19.2&0.70 \\ 
			\hline
			FBHFPD &6921&31.4&27.44 & 8365&38.7&3.52 & 10513&48.3&0.70 \\
			\hline 
			PDBTR & 4379&15.1&27.44& 3827&13.3&3.52 & 4564& 16.0& 0.70 \\
			\hline
			BDRCK & 7139&26.3&27.44 & 3887&14.6&3.52 & 4568&  17.1&0.70 \\
			\hline
	\end{tabular}}\caption{Comparison of FPDHF, 	FBHFPD, PDBTR, and BDRCK for different values of $\lambda_1$ and $\lambda_2$.}\label{T:varlam128}
\end{table}
\end{center}

\begin{figure}\label{fig:graph_1}
\subfloat[]{\label{fig:graph_11}
	\includegraphics[scale=0.322]{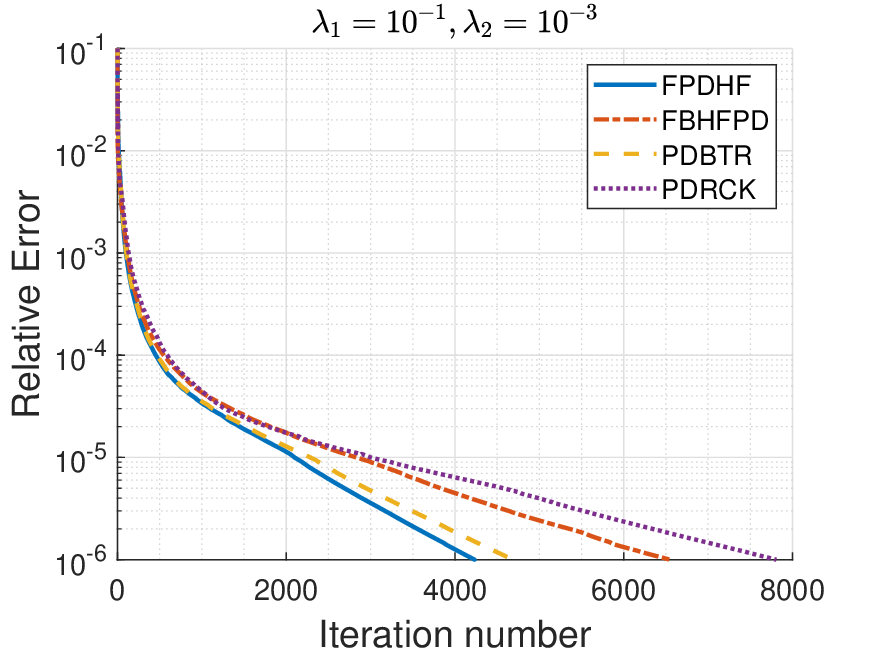}}
\subfloat[]{\label{fig:graph_12}
	\includegraphics[scale=0.322]{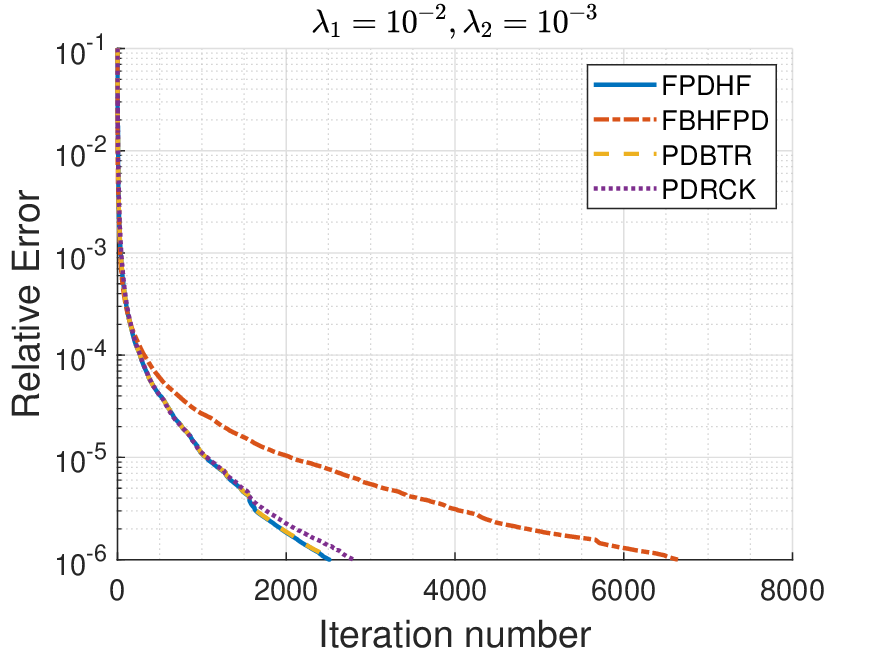}}
\subfloat[]{\label{fig:graph_13}
	\includegraphics[scale=0.322]{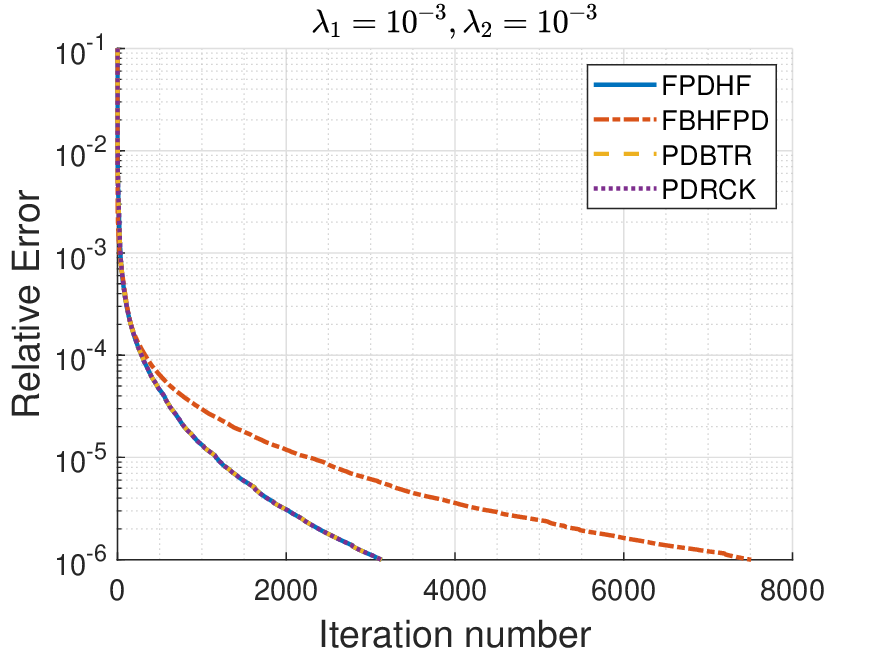}}
\caption{Relative error along iterations. $N=128$, $\delta=10^{-3}$, Haar basis level 3.} \label{fig:graphcomparison_3}
\end{figure}

{\centering
\begin{table}\centering \resizebox{12cm}{!}{
		\begin{tabular}{c|c|cccc}
			\multirow{7}{*}{\shortstack[l]{\vspace*{1cm}\\ \subfloat[Original]{\label{fig:recover_00}\includegraphics[scale=0.32]{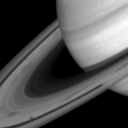}}\\ \subfloat[Blur and Noisy]{\label{fig:recover_01}\includegraphics[scale=0.32]{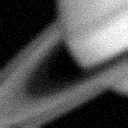}}}} & \multicolumn{1}{c|}{RP}& FPDHF & FBHFPD   & PDBTR & PDRCK\\ \cline{2-6}  & & & & &
			\\
			&\shortstack[l]{$\lambda_1=10^{-1}$\\ \vspace*{0.8cm}$\lambda_2=10^{-3}$}&\subfloat[ 25.62]{\label{fig:recover_113}\includegraphics[scale=0.32]{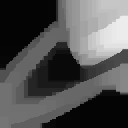}} & \subfloat[ 25.63]{\label{fig:recover_114}\includegraphics[scale=0.32]{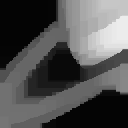}} &
			\subfloat[ 25.62]{\label{fig:recover_115}\includegraphics[scale=0.32]{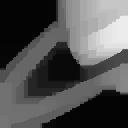}} &
			\subfloat[ 25.62]{\label{fig:recover_116}\includegraphics[scale=0.32]{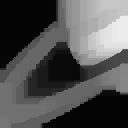}} \\ \cline{2-6} & & & & & \\
			&\shortstack[l]{$\lambda_1=10^{-2}$\\ \vspace*{0.8cm}$\lambda_2=10^{-3}$}&\subfloat[ 27.18]{\label{fig:recover_117}\includegraphics[scale=0.32]{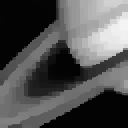}} & \subfloat[ 27.20]{\label{fig:recover_118}\includegraphics[scale=0.32]{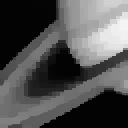}} &
			\subfloat[ 27.19]{\label{fig:recover_119}\includegraphics[scale=0.32]{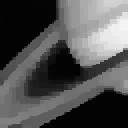}} &
			\subfloat[ 27.18]{\label{fig:recover_120}\includegraphics[scale=0.32]{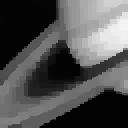}} \\ \cline{2-6}  & & & & & \\
			&\shortstack[l]{$\lambda_1=10^{-3}$\\ \vspace*{0.8cm}$\lambda_2=10^{-3}$}&\subfloat[ 27.01]{\label{fig:recover_121}\includegraphics[scale=0.32]{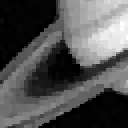}} & \subfloat[ 27.06]{\label{fig:recover_122}\includegraphics[scale=0.32]{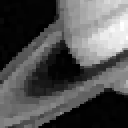}} &
			\subfloat[ 27.01]{\label{fig:recover_123}\includegraphics[scale=0.32]{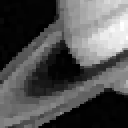}} &
			\subfloat[ 27.01]{\label{fig:recover_124}\includegraphics[scale=0.32]{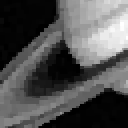}} 
	\end{tabular}}
	\caption{Original, blurred and noisy, and recovered images  for $N=128$, $\delta=10^{-2}$, Haar basis of level 3, and different regularization parameters (RP). The peak signal-to-noise ratio (dB) is displayed below each recovered image.}
	\label{tab:rec_128}
\end{table}
}
{
\begin{center}
	\begin{table}\centering \resizebox{12cm}{!}{
			\begin{tabular}{|c|c|c|c|c|c|c|c|c|c|}
				\cline{2-10}
				\multicolumn{1}{ c|}{} &\multicolumn{3}{|c|}{$\lambda_1=10^{-1}$, $\lambda_2=10^{-3}$}&\multicolumn{3}{|c|}{$\lambda_1=10^{-2}$, $\lambda_2=10^{-3}$}&\multicolumn{3}{|c|}{$\lambda_1=10^{-3}$, $\lambda_2=10^{-3}$}\\ 
				\cline{2-10} 
				\multicolumn{1}{ c|}{}&I.N. & T. (s) & O.V. & I.N. & T. (s) & O.V. & I.N. & T. (s) & O.V. \\ 
				\hhline{-=========} 
				Algorithm &\multicolumn{3}{|c|}{$N=128$}&\multicolumn{3}{|c|}{Haar Basis Level 3}&\multicolumn{3}{|c|}{$\delta=1$}\\ \hline 
				FPDHF & 4213&19.3& 28.01 & 2868&11.8&4.11 &3232&13.4&1.28 \\ 
				\hline
				FBHFPD & 6566& 28.9& 28.01&  7107&31.5&4.11 &8244&         36.6&1.28\\
				\hline 
				PDBTR &4217& 14.5& 28.01&2868&9.9&4.11&3232&11.2&1.28\\
				\hline
				BDRCK & 7092& 25.4& 28.01 & 2973&10.9&4.11 & 3227&  11.8&1.28\\
				\hline
				\hline
				Algorithm &\multicolumn{3}{|c|}{$N=128$}&\multicolumn{3}{|c|}{Haar Basis Level 3}&\multicolumn{3}{|c|}{$\delta=100$}\\ 
				\hline
				FPDHF &  4376&18.0&27.44& 3857&16.6&3.53 &4643&19.8&0.70 \\ 
				\hline
				FBHFPD & 6911&31.5&27.45&  8445&39.2&3.53 &10785&50.1&0.70\\
				\hline 
				PDBTR &4376&15.5&27.44& 3857&13.8&3.53& 4643&16.5&0.70\\
				\hline
				BDRCK & 7121&26.8&27.45&3924& 14.9&3.53&   4639&  17.5&0.70\\
				\hline
				\hline
				Algorithm &\multicolumn{3}{|c|}{$N=128$}&\multicolumn{3}{|c|}{Haar Basis Level 1}&\multicolumn{3}{|c|}{$\delta=10^{-2}$}\\ 
				\hline
				FPDHF &  4301& 17.5&30.07&3026&11.0&6.14&3932&15.2&3.29\\	\hline
				FBHFPD &6788&29.3&30.07&7278&30.8&6.14& 8835&37.5&3.29\\	\hline
				PDBTR &4770&16.2&30.07&3037&10.3&6.14&3932&13.2&3.29\\	\hline
				BDRCK &7909&28.6&30.07&3149&11.3&6.14&3932&14.1&3.29\\	
				\hline
				\hline
				Algorithm &\multicolumn{3}{|c|}{$N=128$}&\multicolumn{3}{|c|}{Haar Basis Level 5}&\multicolumn{3}{|c|}{$\delta=10^{-2}$}\\ 
				\hline
				FPDHF &4278&19.5&27.89&2571&11.7&4.01&3187&14.5&1.16\\ 
				\hline
				FBHFPD &6518&31.9&27.89&6567&32.3&4.01&7342&35.9&1.16\\
				\hline 
				PDBTR &4728&17.6&27.89&2587&9.6&4.01&3187&11.9&1.16\\
				\hline
				BDRCK& 7865&30.8&27.89&2869&11.3&4.01&3188&12.6&1.16\\
				\hline
		\end{tabular}}\caption{Comparison of FPDHF, 	FBHFPD, PDBTR, and BDRCK for different values of $\delta$ and Haar basis level.}\label{T:vardelta128}
	\end{table}
	\end{center}}
	
	\begin{center}
\begin{table}\centering \resizebox{12cm}{!}{
		\begin{tabular}{|c|c|c|c|c|c|c|c|c|c|}
			\cline{2-10}  \multicolumn{1}{ }{}&\multicolumn{3}{|c|}{$N=256$}&\multicolumn{3}{|c|}{Haar Basis Level 3 }&\multicolumn{3}{|c|}{$\delta = 10^{-2}$} \\
			\cline{2-10}
			\cline{2-10}
			\multicolumn{1}{ c|}{}&I.N. & T. (s) & O.V. & I.N. & T. (s) & O.V. & I.N. & T. (s) & O.V. \\ 
			\hhline{-=========} 
			Algorithm &\multicolumn{3}{|c|}{$\lambda_1=10^{-1}$, $\lambda_2=10^{-3}$}&\multicolumn{3}{|c|}{$\lambda_1=10^{-2}$, $\lambda_2=10^{-3}$}&\multicolumn{3}{|c|}{$\lambda_1=10^{-3}$, $\lambda_2=10^{-3}$}\\ \hline
			FPDHF & 10415&202.4&159.75 & 4174&83.7&22.85& 4212& 84.1&7.77\\ 
			\hline
			FBHFPD & 10965&224.7&159.75 & 9041&189.5&22.84 & 9295&      194.6&7.77 \\
			\hline 
			PDBTR & 10410&175.9&159.75& 4169&72.7&22.85&4212& 73.8&7.77\\
			\hline
			BDRCK & 11258&200.6&159.76 & 4147&75.1&22.85 &4212&         76.9&7.77 \\
			\hline
	\end{tabular}}	\caption{FPDHF, 	FBHFPD, PDBTR, and BDRCK for $N=256$.}\label{T:stepsize256}
\end{table}
\end{center}
\begin{figure}\label{fig:graph_2}
\subfloat[]{\label{fig:graph_21}
	\includegraphics[scale=0.322]{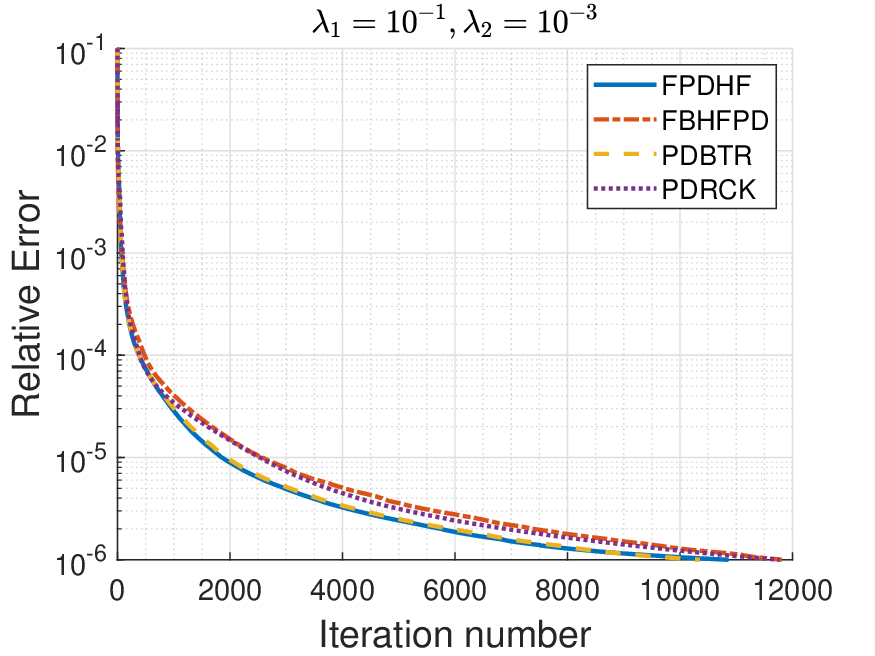}}
\subfloat[]{\label{fig:graph_22}
	\includegraphics[scale=0.322]{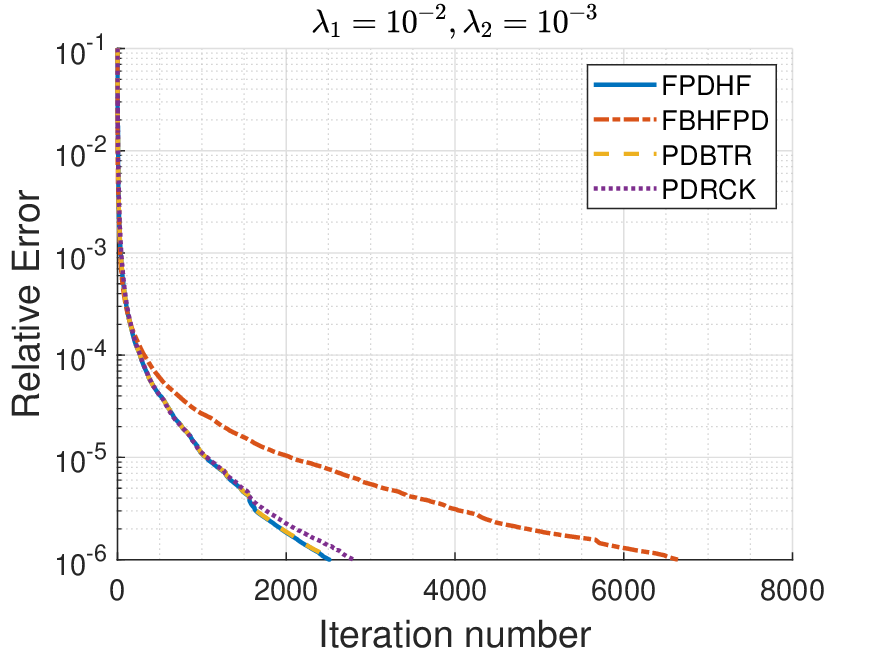}}
\subfloat[]{\label{fig:graph_23}
	\includegraphics[scale=0.322]{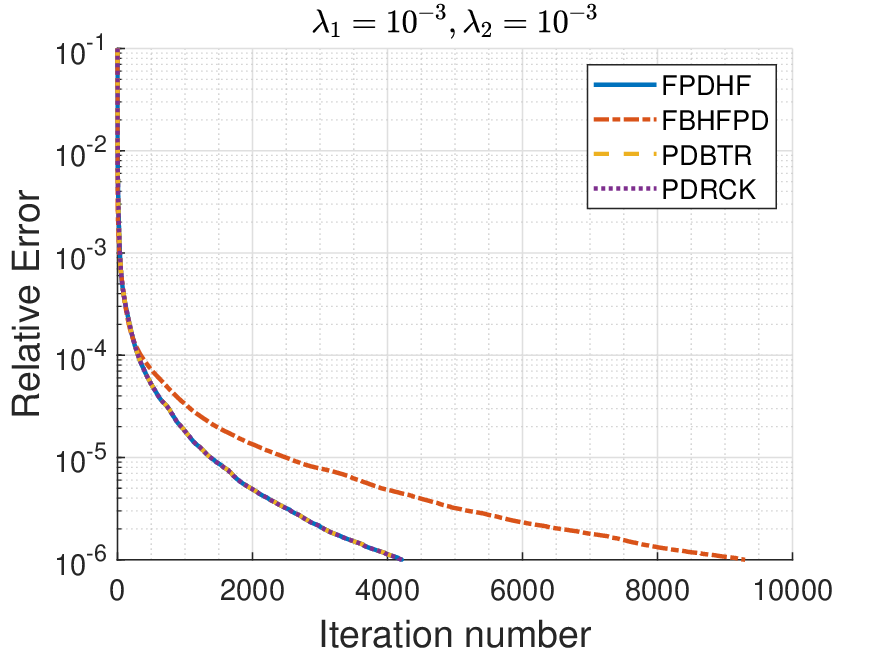}}
\caption{Relative error along iterations. $N=256$, $\delta=10^{-3}$, Haar basis level 3.} \label{fig:graphcomparison_2}
\end{figure}

{\centering
\begin{table}
	\begin{tabular}{c|c|cccc}
		\multirow{7}{*}{\shortstack[l]{\vspace*{1cm}\\ \subfloat[Original]{\label{fig:recover_02}\includegraphics[scale=0.25]{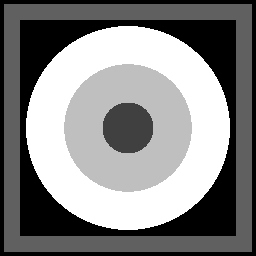}}\\ \subfloat[Blur and Noisy]{\label{fig:recover_03}\includegraphics[scale=0.25]{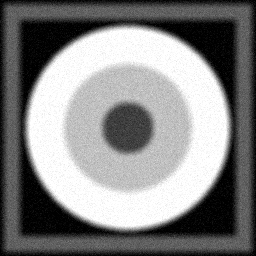}}}} & \multicolumn{1}{c|}{RP}& FPDHF  & FBHFPD  & PDBTR & PDRCK\\ \cline{2-6}  & & & & &
		\\
		&\shortstack[l]{$\lambda_1=10^{-1}$\\ \vspace*{0.8cm}$\lambda_2=10^{-3}$}&\subfloat[23.44]{\label{fig:recover_213}\includegraphics[scale=0.25]{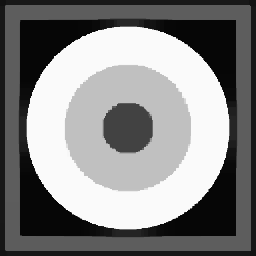}} & \subfloat[23.44]{\label{fig:recover_214}\includegraphics[scale=0.25]{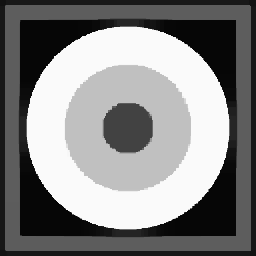}} &
		\subfloat[23.44]{\label{fig:recover_215}\includegraphics[scale=0.25]{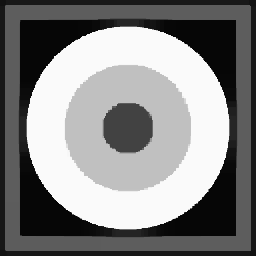}} &
		\subfloat[23.43]{\label{fig:recover_216}\includegraphics[scale=0.25]{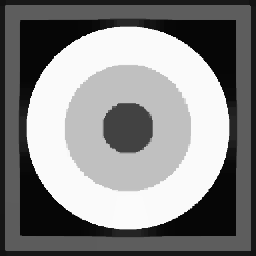}} \\ \cline{2-6} & & & & & \\
		&\shortstack[l]{$\lambda_1=10^{-2}$\\ \vspace*{0.8cm}$\lambda_2=10^{-3}$}&\subfloat[27.21]{\label{fig:recover_217}\includegraphics[scale=0.25]{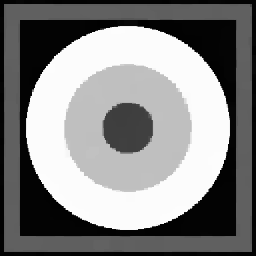}} & \subfloat[27.27]{\label{fig:recover_218}\includegraphics[scale=0.25]{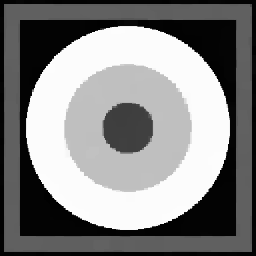}} &
		\subfloat[27.21]{\label{fig:recover_219}\includegraphics[scale=0.25]{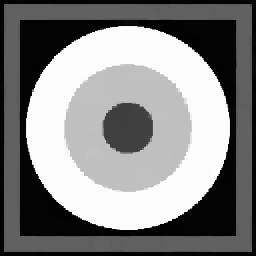}} &
		\subfloat[27.21]{\label{fig:recover_220}\includegraphics[scale=0.25]{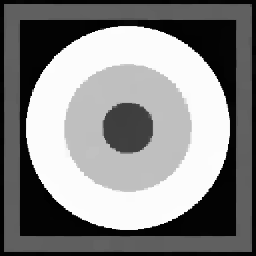}} \\ \cline{2-6}  & & & & & \\
		&\shortstack[l]{$\lambda_1=10^{-3}$\\ \vspace*{0.8cm}$\lambda_2=10^{-3}$}&\subfloat[ 26.03]{\label{fig:recover_221}\includegraphics[scale=0.25]{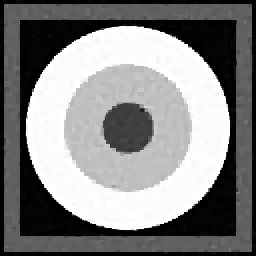}} & \subfloat[26.11]{\label{fig:recover_222}\includegraphics[scale=0.25]{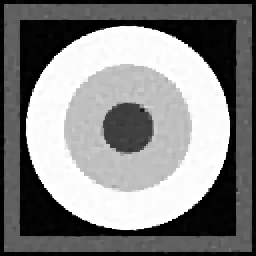}} &
		\subfloat[26.03]{\label{fig:recover_223}\includegraphics[scale=0.25]{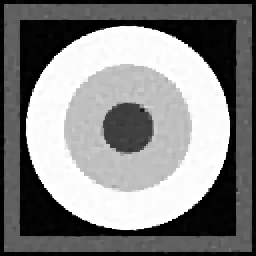}} &
		\subfloat[26.03]{\label{fig:recover_224}\includegraphics[scale=0.25]{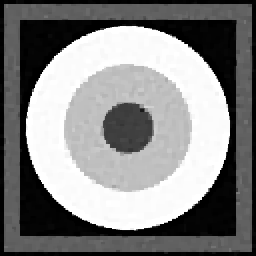}} 
	\end{tabular}
	\caption{Original, blurred and noisy, and recovered images  for $N=256$, $\delta=10^{-2}$, Haar basis of level 3, and different regularization parameters (RP). The peak signal-to-noise ratio (dB) is displayed below each recovered image.}
	\label{tab:rec_256}
\end{table}
}

\subsection{Comments on the step-size}
The choice of step-sizes directly influences the speed of convergence in primal-dual algorithms. For instance, selecting the largest possible step-sizes improves the numerical convergence of the method \cite{BricenoRoldan2021,HeYuan2012}. On the other hand, \cite{ChambolleAdaptive24} proposed an adaptive strategy to select the step-size of the Stochastic Primal–Dual Hybrid Gradient algorithm to accelerate its numerical convergence, highlighting the importance of appropriate step-size selection. In this section, we demonstrate that FPDHF allows for larger step-sizes than PDBTR and PDRCK, resulting in numerical advantages.  

In the context of Problem~\ref{pro:main}, let $(\tau,\sigma) \in \RPP^2$. By \cite[Corollary~6.1]{MorinBanertGiselsson2022}, the convergence of PDBTR is guaranteed for $(\tau,\sigma)$ if there exist $\epsilon >0$ such that 
\begin{equation}\label{eq:stepsize2}
\tau \sigma \|L\|^2 +(|2-\lambda_n|+|2-\lambda_{n+1}|)\sqrt{\tau\sigma}\|L\|+\tau\left(2\zeta + \frac{1}{2\beta}\right) < 1-\epsilon,
\end{equation}
where, for every $n \in \N$, $\lambda_n \in \R$. If \eqref{eq:stepsize2} holds, in particular we have
\begin{equation}\label{eq:stepsize25}
\tau \sigma \|L\|^2 +2\tau\zeta + \frac{\tau}{2\beta}< 1.
\end{equation} 
Moreover, it follows from \eqref{eq:stepsize25} that
\begin{equation*}
2\tau\zeta < 1 \Leftrightarrow \tau^2\zeta^2 < \frac{\tau\zeta}{2} \Rightarrow \tau^2\zeta^2 < 2\tau\zeta.
\end{equation*}
Hence
\begin{equation}
\sigma \|L\|^2 +\tau^2\zeta^2 + \frac{\tau}{2\beta} 	< \tau \sigma \|L\|^2 +2\tau\zeta + \frac{\tau}{2\beta}< 1.
\end{equation}
Therefore, $(\tau,\sigma)$ satisfies the condition in Theorem~\ref{teo:conve1} ensuring the convergence of FPDHF for $\varepsilon = \tau/(2\beta)$. Additionally, by setting $\zeta = 1$, $\beta = 1/2$, and $L$ such that $\|L\|=1$, \eqref{eq:stepsize25} forces $\tau < 1/3$, but FPDHF admits, for example, $\tau = 1/2$ if $\sigma < 1/2$, showing that PDBTR is more restrictive on the step-sizes than FPDHF. On the other hand, from \cite[Corollary~6.3]{MorinBanertGiselsson2022}, the convergence of PDRCK is guaranteed for $(\tau,\sigma)$ such that 
\begin{equation}\label{eq:stepsize3}
2\tau \sigma \|L\|^2 +2\tau\zeta + \frac{\tau}{2\beta} < 1.
\end{equation}
By setting $\lambda_n\equiv 2$ in \eqref{eq:stepsize2}, it is directly that PDRCK is more restrictive on the step-sizes than PDBTR, and consequently, than FPDHF. This advantage of FPDHF in allowing larger step sizes can translate into numerical benefits as demonstrated in the following example.
In the context of Section~\ref{sec:subsecNE1}, we consider the following parameters: $N \in \{128, 256\}$, $\lambda_1 \in \{10^{-1},10^{-2},10^{-3}\}$, $\lambda_2 = 10^{-3}$, Haar basis of level 3, and $\delta =10^{-4}$. In this setting, $\delta$ is smaller than in the previous section, resulting in a larger Lipschitz constant, which in turn makes $\tau$ smaller. The step-sizes are detailed in Table~\ref{T:stepsize2}. $\tau$ is chosen to be as large as possible by a factor of $0.95$ and $\sigma$ is determined according to Table~\ref{T:stepsize}. Due the difficulty of setting the step-sizes for FBHFPD (see \cite[Corollary~5.2]{BricenoDavis2018}), we select $\tau$ as in FPDHF. 
Although this value is admissible for FBHFPD, the corresponding value of $\sigma$ for FBHFPD is smaller than that for FPDHF. The results are shown in Table~\ref{T:128_delta}, Figure~\ref{fig:graphcomparison_4}, and  Figure~\ref{fig:graphcomparison_5}. We can note that FPDHF has the best performance in terms of iteration number and CPU time. The advantages are even pronounced when $\lambda_1$ decreases. This can be explained by the fact that $\lambda_1$ is related to the dual variable; hence, as $\lambda_1$ decreases, the primal term become more dominant in the optimization problem, allowing a larger $\tau$ to provide numerical benefits.
\begin{center}
\begin{table}\centering \resizebox{12cm}{!}{
		\begin{tabular}{|c|c|c|}\hline
			Algorithm & $\tau$ & $\sigma$ \\ \hline 
			FPDHF &  $0.95\cdot(\sqrt{1/(4\beta^2)+4\zeta^2)}-1/(2\beta))/(2\zeta^2)\approx 0.0927$ & $\approx 0.1284$ \\
			\hline
			FBHFPD &  $0.95\cdot(\sqrt{1/(4\beta^2)+4\zeta^2)}-1/(2\beta))/(2\zeta^2)\approx 0.0927$ & $\approx 0.0398$\\
			\hline 
			PDBTR & $0.95\cdot1/(2\zeta+1/(2\beta))\approx 0.0463$ & $\approx 0.1349$\\
			\hline
			BDRCK &  $0.95\cdot1/(2\zeta+1/(2\beta))\approx 0.0463$ & $\approx0.0674$ \\
			\hline
	\end{tabular}}	\caption{Step-sizes implemented in the experiments. $\zeta = \lambda_2/\delta $ and $\beta = 1$.}\label{T:stepsize2}
\end{table}
\end{center}
\begin{center}
\begin{table}\centering \resizebox{12cm}{!}{
		\begin{tabular}{|c|c|c|c|c|c|c|c|c|c|}
			\cline{5-10}  \multicolumn{4}{ }{}&\multicolumn{3}{|c|}{Haar Basis Level 3 }&\multicolumn{3}{|c|}{$\delta = 10^{-4}$} \\
			\hline
			Algorithm&I.N. & T. (s) & O.V. & I.N. & T. (s) & O.V. & I.N. & T. (s) & O.V. \\ 
			\hline
			\hline
			$N=128$ &\multicolumn{3}{|c|}{$\lambda_1=10^{-1}$, $\lambda_2=10^{-3}$}&\multicolumn{3}{|c|}{$\lambda_1=10^{-2}$, $\lambda_2=10^{-3}$}&\multicolumn{3}{|c|}{$\lambda_1=10^{-3}$, $\lambda_2=10^{-3}$}\\ \hline
			FPDHF & 7083&29.6&28.24&7942&32.1&4.35&9095&37.9&1.48\\ 
			\hline
			FBHFPD & 14491&66.5&28.25&13289&58.3&4.35&14235&63.4&1.48\\ 
			\hline
			PDBTR & 8856&30.5&28.24&11167&38.8&4.35&12646&44.8&1.48\\ 
			\hline
			BDRCK &	10178&36.8&28.25&11114&40.1&4.35&12649&47.4&1.48\\
			\hline
			\hline
			$N=256$ &\multicolumn{3}{|c|}{$\lambda_1=10^{-1}$, $\lambda_2=10^{-3}$}&\multicolumn{3}{|c|}{$\lambda_1=10^{-2}$, $\lambda_2=10^{-3}$}&\multicolumn{3}{|c|}{$\lambda_1=10^{-3}$, $\lambda_2=10^{-3}$}\\ \hline
			FPDHF &12407&257.3&161.0&10269&204.7&22.86&10414&208.2&7.79\\ 
			\hline
			FBHFPD &17392&329.4&161.0&16023&321.6&22.86&16251&323.6&7.79\\ 
			\hline
			PDBTR &15001&263.1&161.0&13806&242.3&22.86&14460&253.24&7.79\\ 
			\hline
			BDRCK &15531&283.3&161.0&13816&251.9&22.86&14463&264.1&7.79 \\
			\hline
	\end{tabular}}\caption{Comparison of FPDHF, FBHFPD, PDBTR, and BDRCK.}\label{T:128_delta}
\end{table}
\end{center}
\begin{figure}
\subfloat[]{\label{fig:graph_31}
	\includegraphics[scale=0.322]{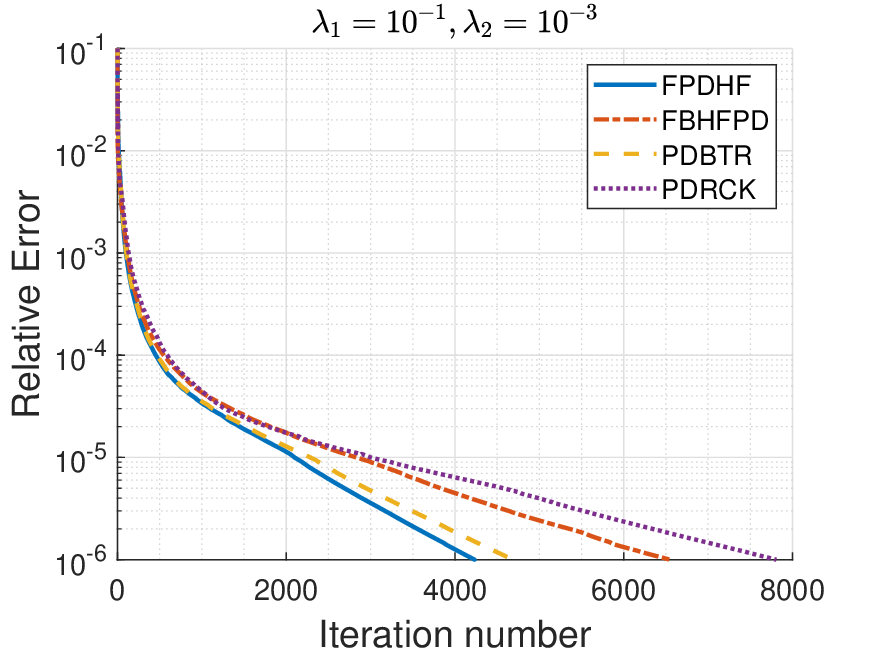}}
\subfloat[]{\label{fig:graph_32}
	\includegraphics[scale=0.322]{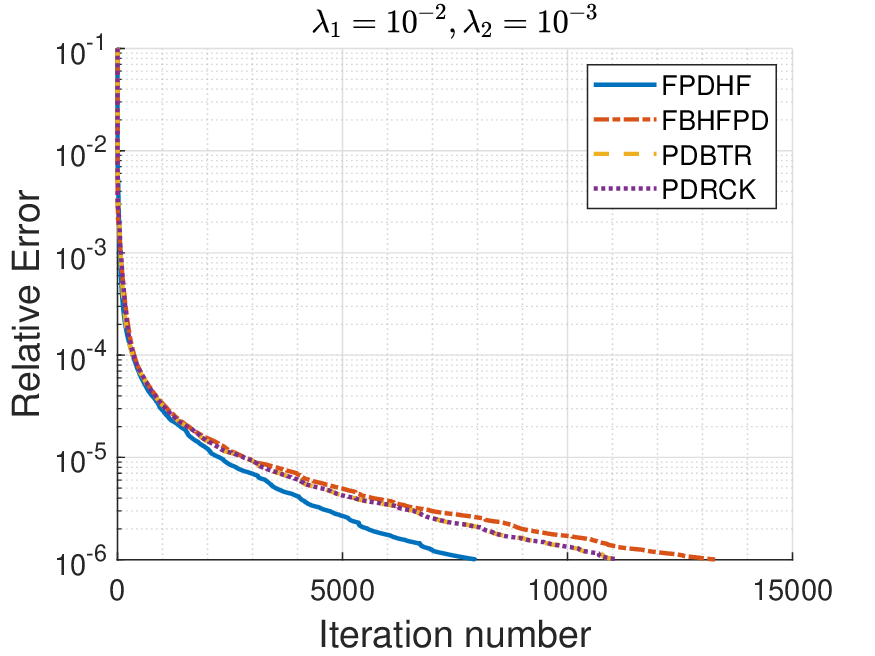}}
\subfloat[]{\label{fig:graph_33}
	\includegraphics[scale=0.322]{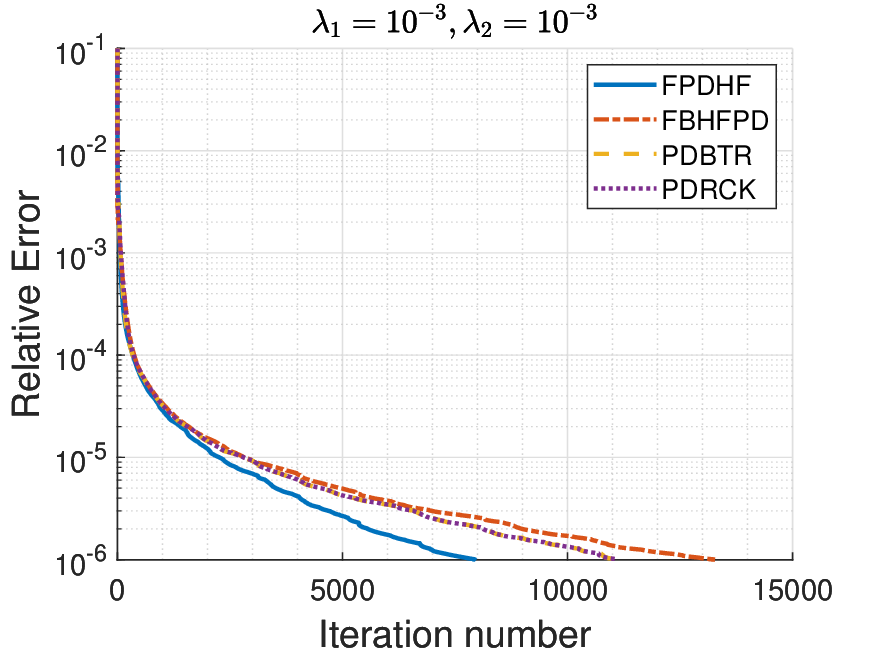}}
\caption{Relative error along iterations.  $N=128$, $\delta=10^{-3}$, Haar basis level 3.} \label{fig:graphcomparison_4}
\end{figure}
\begin{figure}
\subfloat[]{\label{fig:graph_41}
	\includegraphics[scale=0.322]{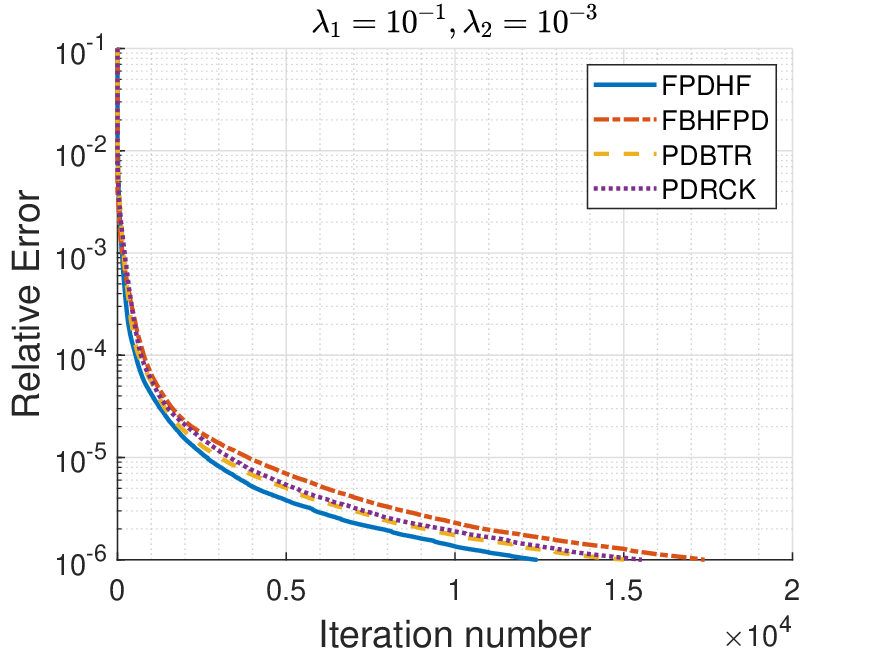}}
\subfloat[]{\label{fig:graph_42}
	\includegraphics[scale=0.322]{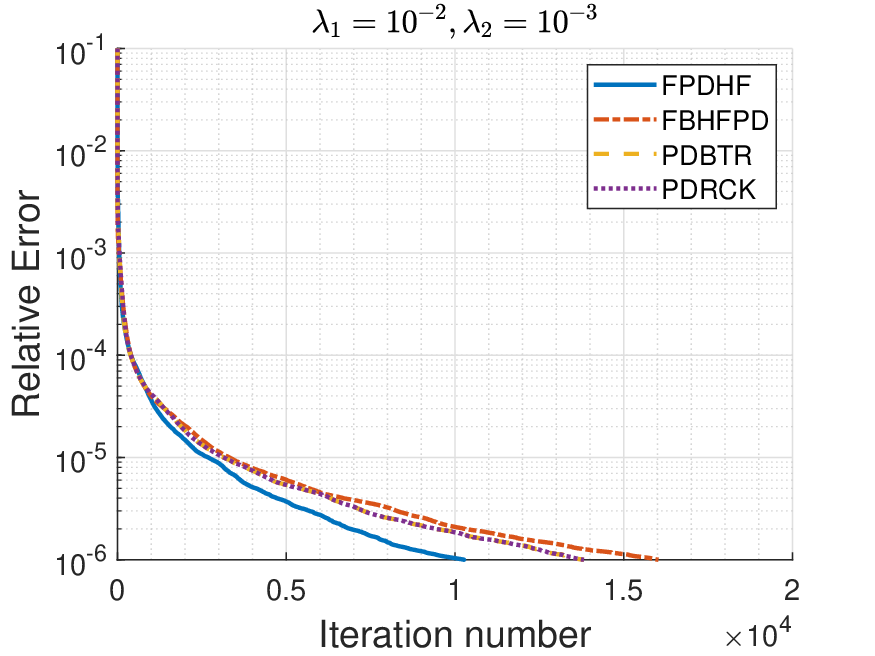}}
\subfloat[]{\label{fig:graph_43}
	\includegraphics[scale=0.322]{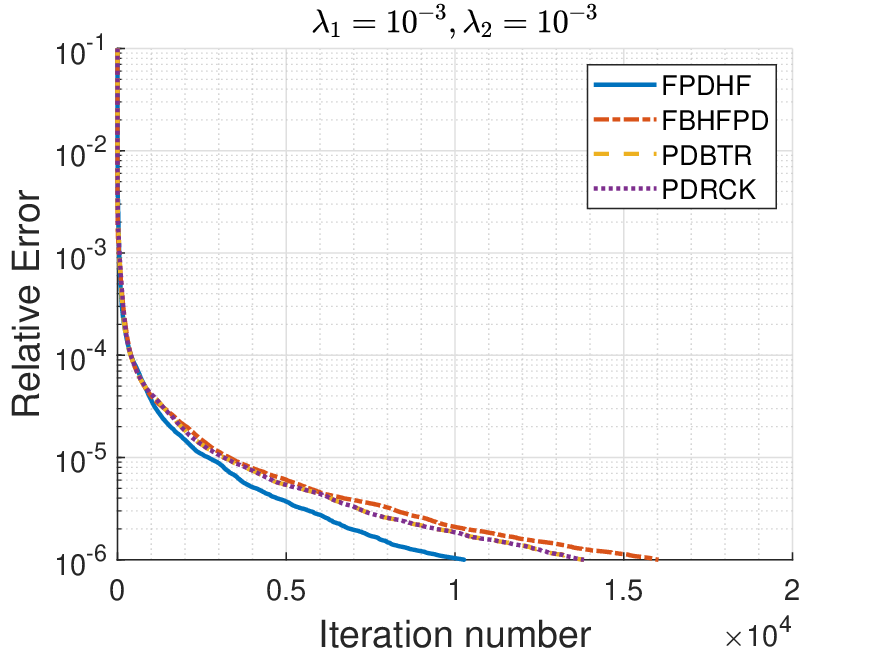}}
\caption{Relative error along iterations. $N=256$, $\delta=10^{-3}$, Haar basis level 3.} \label{fig:graphcomparison_5}
\end{figure}
\section{Conclusion}
In this article, we have introduced an algorithm to find zeros of the sum of a maximally monotone operator, a maximally monotone operator composed with a linear operator, a Lipschitz operator, and a cocoercive operator. On each iteration, our method splits all the operators involved in the problem. In specific scenarios where the Lipschitz operator is absent or when the linear composite term is absent, our method reduces to the Condat-V\~u algorithm or to the Forward-Backward-Half-Forward algorithm (FBHF), respectively, with their respective step-sizes. We have also presented applications in multivariate monotone inclusions and saddle point problems. To conclude, we have compared the proposed method with existing methods in the literature in image deblurring problems. {In this numerical example, we observed numerical advantages of the proposed method over existing methods in the literature. These advantages are mainly due the fact that the proposed method allows for larger step sizes.  Future work arising from this manuscript includes, for example, studying the backtracking versions of the algorithm (similar to the backtracking versions of FBF and FBHF \cite{BricenoDavis2018,Tseng2000SIAM}) and their convergence rates in primal-dual optimization problems.}

\section*{Acknowledgments}
The author acknowledges the support of Inria-Saclay during their postdoctoral studies.


\end{document}